\theoremstyle{plain}
\newtheorem{lemma}{Lemma}[section]
\newtheorem{proposition}[lemma]{Proposition}
\newtheorem{theo}[lemma]{Theorem}
\newtheorem{coro}[lemma]{Corollary}
\theoremstyle{remark}
\newtheorem{rem}[lemma]{Remark}
\newtheorem{notation}[lemma]{Notation}
\theoremstyle{definition}
\newtheorem{definition}[lemma]{Definition}
\newtheorem{ex}[lemma]{Example}
\def\id{\mathop{\rm Id}\nolimits}
\def\inc{\mathop{\rm inc}\nolimits}
\def\K{\mathcal{K}}
\def\Ima{\mathop{\rm Im}\nolimits} 
\def\sgn{\mathop{\rm sgn}\nolimits}
\def\N{\mathbb{N}}
\def\Z{\mathbb{Z}}
\def\tq{\;/\;}
\def\ch{\mathop{\rm Ch}\nolimits}
\def\Ab{\mathcal{A}b}
\def\colim{\mathop{\rm colim}}
\def\coker{\mathop{\rm coker}\nolimits}
\def\hocolim{\mathop{\rm hocolim}}
\def\nhhocolim{\mathop{\underline{\rm hocolim}}}
\def\Aut{\text{Aut}}
\def\ch{\mathop{\rm Ch}\nolimits}
\newcommand\F{\mathcal{F}}
\newcommand\G{\mathcal{G}}
\newcommand{\op}{\textnormal{op}}
\newcommand\rk{\textnormal{rk}}
\newcommand\pr{\textnormal{pr}}
\newcommand\bij{\textnormal{Bij}}
\def\set#1{\llbracket #1 \rrbracket}
\newcommand\g{{\bf g}}
\title[Homology of posets with functor coefficients and Khovanov homology]{Homology of posets with functor coefficients and its relation to Khovanov homology of knots}
\author{Nicol\'as Cianci}
\address{Facultad de Ciencias Exactas y Naturales, Universidad Nacional de Cuyo, Mendoza, Argentina.}
\email{nicocian@gmail.com}
\author{Miguel Ottina}
\address{Facultad de Ciencias Exactas y Naturales, Universidad Nacional de Cuyo, Mendoza, Argentina.}
\email{miguelottina@gmail.com}
\subjclass[2010]{Primary: 55N35, 06A11. Secondary: 57M27.}
\keywords{Homology groups. Khovanov homology. Poset. Alexandroff space.}
\thanks{This research was partially supported by grant and M044 (2016--2018) of SeCTyP, UNCuyo. The first author was also partially supported by a CONICET doctoral fellowship.}
\begin{document}

\begin{abstract}
We study homology groups of posets with functor coefficients and apply our results to give a novel approach to study Khovanov homology of knots and related homology theories.
\end{abstract}

\maketitle

\section{Introduction}

The homology groups of a poset $P$ with coefficients in an abelian group $A$ can be defined as the simplicial homology groups with coefficients in $A$ of its order complex $\K(P)$, that is, the simplicial complex whose faces are the non-empty chains of $P$. These coincide with the singular homology groups with coefficients in $A$ of the classifying space of $P$ when considered as a small category and also coincide with the homology groups of the constant functor with value $A$ from $P$ to the category $\Ab$ of abelian groups.

The homology groups of a poset $P$ with coefficients in a functor $\F\colon P\to \Ab$ are natural generalizations of the latter and are defined in terms of the left-derived functors of the colimit functor $\text{colim}\colon \Ab^{P}\to \Ab$. They were studied by D. Quillen in \cite{quillen1978homotopy} by means of spectral sequences. Concretely, given an order preserving map $f\colon X\to Y$ between posets and a functor $\F\colon P\to \Ab$, Quillen gives a spectral sequence that converges to the homology groups of $X$ with coefficients in $\F$, whose second page can be expressed in terms of the homology groups of $Y$ with coefficients in certain functors $\mathcal{H}_q\colon Y \to \Ab$, $q\geq 0$, that are induced by the functor $\F$ and the order preserving map $f\colon X\to Y$.

\medskip

In \cite{cianci2017new} we computed the integral homology groups of a poset relative to certain subposets and employed these results to derive a homological spectral sequence that can be used to compute the integral homology groups of a poset. In this article, we extend some of these results to homology of posets with functor coefficients and use them together with Quillen's spectral sequence in explicit computations. In particular, we show how the integral homology groups of a finite poset can be efficiently computed in terms of the homology groups of smaller posets with functor coefficients. In addition, we obtain an alternative proof to a well-known generalization of the Mayer-Vietoris exact sequence for basis-like open covers of posets and a version of the Serre spectral sequence with local coefficients for posets.

Moreover, we apply our results to show that Khovanov homology of knots is a special instance of homology of posets with functor coefficients, giving an alternative and more conceptual proof to a similar result given by Everitt and Turner \cite{everitt2009homology}. As a corollary we obtain different proofs to known results of Khovanov homology. We believe that our approach might lead to new insights into Khovanov homology and similar homology theories.

\section{Preliminaries} \label{sect_prelim}

Recall that an Alexandroff space is a topological space $X$ such that the intersection of an arbitrary collection of open subsets of $X$ is open. Thus, if $X$ is an Alexandroff space, for every element $x\in X$ there exists a minimal open set containing $x$, namely the intersection of every neighbourhood of $x$, which will be denoted by $U^{X}_x$, or simply by $U_x$ if the space $X$ is understood and there is no risk of confusion. It is well known that the relation $\leq$ defined on $X$ by $x\leq y$ if and only if $U_x\subseteq U_y$ (or equivalently, if and only if $x\in U_y$) is a preorder, and that this preorder is an order if and only if $X$ is a $T_0$--space. Moreover, every preordered set $(P,\leq)$ can be regarded as an Alexandroff space with the topology given by the down sets of $P$. These (mutually inverse) constructions establish a bijective correspondence between Alexandroff spaces and preordered sets, and between Alexandroff $T_0$--spaces and partially ordered sets. Thus every Alexandroff $T_0$--space will be considered as a poset endowed with the partial order $\leq$ defined above.

If $X$ is an Alexandroff $T_0$--space, the set $U_x$ is just the set $\{y\in X:y\leq x\}$. For every $x\in X$ we define the sets 
\begin{itemize}
\item $F^{X}_x=\{y\in X:y\geq x\}$
\item $C^{X}_x=U^{X}_x\cup F^{X}_x$
\item $\hat{U}^{X}_x=U^{X}_x-\{x\}$
\item $\hat{F}^{X}_x=F^{X}_x-\{x\}$
\item $\hat{C}^{X}_x=C^{X}_x-\{x\}$.
\end{itemize}
As before, if there is no risk of confusion the superscript $X$ will be omitted and these sets will be written simply as $F_x$, $C_x$, $\hat{U}_x$, $\hat{F}_x$ and $\hat{C}_x$ respectively.

Let $X$ be a finite $T_0$--space and let $x\in X$. We say that $x$ is a \emph{down beat point} of $X$ if $\widehat{U}_x$ has a maximum and that $x$ is an \emph{up beat point} of $X$ if $\widehat{F}_x$ has a minimum. We say that $x$ is a \emph{beat point} of $X$ if $x$ is either a down beat point of $X$ or an up beat point of $X$.
We say that a finite $T_0$--space $X$ is a \emph{minimal space} if $X$ does not have beat points. A \emph{core} of $X$ is any minimal space homotopically equivalent to $X$. 

In \cite{stong1966finite}, Stong proved that if $x$ is a beat point of $X$ then $X-\{x\}$ is a strong deformation retract of $X$. Hence, every finite $T_0$--space has a core obtained by successively removing its beat points. Moreover, Stong proved that two finite $T_0$--spaces are homotopy equivalent if and only if their cores are homeomorphic. In particular, the core of a finite $T_0$--space is unique up to homeomorphism.
Using these results, it is easy to see that if there exists $x\in X$ such that $X=U_x$ then $X$ is contractible.

Given an Alexandroff $T_0$--space $X$, the finite non-empty chains of $X$ define a simplicial complex $\K(X)$ called the \emph{order complex} of $X$. In \cite{mccord1966singular}, McCord proved that there is a weak homotopy equivalence from the geometric realization $|\K(X)|$ of $\K(X)$ to $X$.
In particular the singular homology groups of a finite topological space $X$ can be computed as the simplicial homology groups of $\K(X)$.

In \cite{cianci2017new} we developed a spectral sequence that converges to the homology groups of an Alexandroff $T_0$--space $X$ and gave a complete description of the differential morphisms. Under additional hypotheses, the first page of this spectral sequence reduces to a chain complex which allows us to obtain the homology groups of finite posets using a few simple computations.

When considered as a poset, an Alexandroff $T_0$--space $X$ can be regarded as a small category in the standard way, that is, the objects of the category are the elements of $X$ and for $x,y\in X$ there is a (unique) morphism from $x$ to $y$ if and only if $x\leq y$. 
Given a (covariant) functor $\F\colon X\to\Ab$ from $X$ to the category of abelian groups, the homology groups of $X$ with coefficients $\F$ (denoted by $H_n(X;\F)$, $n\geq 0$) are defined as the homology groups of the chain complex $C(X;\F)$ given by \[C_n(X;\F)=\bigoplus_{\sigma \in \ch_n(X)} \F(\min \sigma)\]
where $\ch_n(X)$ denotes the set of $n$-chains of $X$ (that is, chains of $X$ of cardinality $n+1$) and with the differentials defined in the usual way \cite{quillen1978homotopy}. Note that when $\F$ is the constant functor $\Z$, these homology groups are the integral homology groups of $\K(X)$. The functors $L_n\colon \Ab^X \to \Ab$, $n\geq 0$, given by $L_n(\F)=H_n(X;\F)$ are the left-derived functors of the colimit functor \cite{gabriel1967calculus}.

\section{Relative homology with functor coefficients}

Let $X$ be an Alexandroff $T_0$--space and let $\F\colon X\to \Ab$ be a (covariant) functor.
If $A\subseteq X$ is a subspace, we define the homology groups $H_n(X,A;\F)$, $n\geq 0$, as the homology groups of the chain complex $C(X,A;\F)$ defined as the quotient complex $C(X;\F)/C(A;\F|_A)$. Clearly, there is a long exact sequence
\begin{displaymath}
\xymatrix{\cdots \ar[r] & H_n(A;\F|_A) \ar[r]^-{i_\ast} & H_n(X;\F) \ar[r]^-{j_\ast} & H_n(X,A;\F) \ar[r]^-{\partial} & H_{n-1}(A;\F|_A) \ar[r] & \cdots}
\end{displaymath}
Now, if $\F,\G\colon X\to \Ab$ are (covariant) functors and $T\colon F\Rightarrow G$ is a natural transformation then $T$ induces a morphism of chain complexes $\widetilde{T}\colon C(X;\F)\to C(X;\G)$ defined by $\widetilde{T}_n=\bigoplus\limits_{\sigma \in \ch_n(X)} T_{\min \sigma}$. \label{morphism_induced_by_natural_transformation}
Moreover, if $T_A\colon \F|_A\Rightarrow \G|_A$ is the natural transformation induced by $T$, there is a commutative diagram
\begin{displaymath}
\xymatrix{\cdots \ar[r] & H_n(A;\F|_A) \ar[r]^-{i_\ast} \ar[d]_{((\widetilde{T_A})_n)_\ast} & H_n(X;\F) \ar[r]^-{j_\ast} \ar[d]_{(\widetilde{T}_n)_\ast} & H_n(X,A;\F) \ar[r]^-{\partial}  \ar[d]_{(\widetilde{T}_n|)_\ast} & H_{n-1}(A;\F|_A) \ar[r] \ar[d]_{((\widetilde{T_A})_{n-1})_\ast} & \cdots
\\ \cdots \ar[r] & H_n(A;\G|_A) \ar[r]^-{i_\ast} & H_n(X;\G) \ar[r]^-{j_\ast} & H_n(X,A;\G) \ar[r]^-{\partial} & H_{n-1}(A;\G|_A) \ar[r] & \cdots}
\end{displaymath}

If $x\in X$ it is easy to see that $H_0(U_x;\F|_{U_x})=\colim \F|_{U_x}= \F(x)$ and $H_n(U_x;\F|_{U_x})=0$ for $n\geq 1$. Indeed, if $$\cdots \to P_1 \to P_0 \to \F|_{U_x} \to 0$$
is a projective resolution of $\F|_{U_x}$ in $\Ab^{U_x}$ then 
$$\cdots \to \colim P_1 \to \colim P_0 \to \colim \F|_{U_x} \to 0$$
is exact since $\colim P_i=P_i(x)$ for all $i$.

Therefore, we obtain the following simple result.
\begin{proposition}\label{prop_hom_Ux}
Let $X$ be a poset, let $x\in X$ and let $\F\colon X\to\Ab$ be a functor. Let $\alpha\colon \colim \F|_{\widehat U_x} \to \F(x)$ be the morphism induced by the maps $\F(y\to x)$ for $y<x$.

Then
\begin{displaymath}
H_n(U_x,\widehat U_x;\F|_{U_x})\cong \left\{
\begin{array}{ll}
\coker \alpha & \textnormal{if $n=0$}\\
\ker \alpha & \textnormal{if $n=1$} \\
H_{n-1}(\widehat U_x;\F|_{\widehat U_x}) & \textnormal{if $n\geq 2$} 
\end{array}
\right.
\end{displaymath}
\end{proposition}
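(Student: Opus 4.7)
The plan is to feed the computation of $H_n(U_x;\F|_{U_x})$ that was just established into the long exact sequence of the pair $(U_x,\widehat U_x)$ with coefficients in $\F|_{U_x}$. Since $U_x$ has $x$ as a maximum, the argument just sketched before the statement gives $H_0(U_x;\F|_{U_x})\cong\F(x)$ and $H_n(U_x;\F|_{U_x})=0$ for $n\geq 1$. Substituting this into the long exact sequence
\[
\cdots\to H_n(\widehat U_x;\F|_{\widehat U_x})\to H_n(U_x;\F|_{U_x})\to H_n(U_x,\widehat U_x;\F|_{U_x})\to H_{n-1}(\widehat U_x;\F|_{\widehat U_x})\to\cdots
\]
immediately kills the middle term in the appropriate range and should deliver the three cases.

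For $n\geq 2$, the neighbouring terms $H_n(U_x;\F|_{U_x})$ and $H_{n-1}(U_x;\F|_{U_x})$ both vanish, so the connecting map yields an isomorphism $H_n(U_x,\widehat U_x;\F|_{U_x})\cong H_{n-1}(\widehat U_x;\F|_{\widehat U_x})$, which is the third case. For the remaining range, I would extract the tail
\[
0\to H_1(U_x,\widehat U_x;\F|_{U_x})\to H_0(\widehat U_x;\F|_{\widehat U_x})\xrightarrow{\,i_\ast\,}H_0(U_x;\F|_{U_x})\to H_0(U_x,\widehat U_x;\F|_{U_x})\to 0
\]
and read off $H_1\cong\ker i_\ast$ and $H_0\cong\coker i_\ast$.

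The one step that requires more than bookkeeping is the identification of $i_\ast$ with $\alpha$. Under the isomorphisms $H_0(\widehat U_x;\F|_{\widehat U_x})\cong\colim\F|_{\widehat U_x}$ and $H_0(U_x;\F|_{U_x})\cong\colim\F|_{U_x}\cong\F(x)$, the inclusion $\widehat U_x\hookrightarrow U_x$ induces the canonical map between these colimits; by the universal property it is determined by its composition with the cocone maps $\F(y)\to\colim\F|_{\widehat U_x}$, which land in $\F(x)$ via $\F(y\to x)$. That is exactly how $\alpha$ is defined, so $i_\ast=\alpha$ and the two boundary cases of the statement follow.

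The only obstacle worth flagging is this last identification: one must check that the natural map $\colim\F|_{\widehat U_x}\to\F(x)=\colim\F|_{U_x}$ coming from the morphism of diagrams agrees, at the level of the chain complexes $C(\widehat U_x;\F|_{\widehat U_x})\to C(U_x;\F|_{U_x})$ defining the homology, with the map induced by inclusion of chains. This is routine from the definition $C_n(X;\F)=\bigoplus_{\sigma\in\ch_n(X)}\F(\min\sigma)$ together with the standard presentation of $\colim$ as a cokernel, so I do not expect any real difficulty beyond writing it out carefully.
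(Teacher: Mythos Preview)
Your proposal is correct and follows essentially the same approach as the paper: the paper's proof just asserts that the inclusion-induced map $H_0(\widehat U_x;\F|_{\widehat U_x})\to H_0(U_x;\F|_{U_x})$ coincides with $\alpha$ and then says ``the result follows,'' which is exactly the long exact sequence argument you spell out. If anything, you have written out more detail than the paper provides.
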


\begin{proof}
It is not difficult to prove that the morphism $H_0(\widehat U_x;\F|_{\widehat U_x})\to H_0 (U_x;\F|_{U_x})$ induced by the inclusion is $\alpha$. The result follows.
\end{proof}

\begin{proposition}\label{prop_hom_Fx}
Let $X$ be a poset, let $x\in X$ and let $\F\colon X\to\Ab$ and $\G\colon X\to\Ab$ be functors such that $\F(x)$ is isomorphic to $\G(x)$. Then 
\begin{displaymath}
H_n(F_x,\widehat F_x;\F|_{F_x})\cong H_n(F_x,\widehat F_x;\G|_{F_x})
\end{displaymath}
for all $n\in\Z$.
\end{proposition}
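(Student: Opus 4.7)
The plan is to argue at the chain level that the relative chain complex $C(F_x,\widehat F_x;\F|_{F_x})$ depends on the functor $\F$ only through the abelian group $\F(x)$, so that any isomorphism $\F(x)\cong\G(x)$ lifts to an isomorphism of chain complexes. Since $F_x$ has $x$ as its minimum, every $n$-chain $\sigma$ of $F_x$ is either entirely contained in $\widehat F_x$, in which case its summand dies in the quotient defining $C(F_x,\widehat F_x;\F|_{F_x})$, or it contains $x$, in which case $\min\sigma=x$ and the corresponding summand is precisely $\F(x)$. Thus, as graded abelian groups, $C(F_x,\widehat F_x;\F|_{F_x})$ is a direct sum of copies of $\F(x)$ indexed by those chains of $F_x$ that contain $x$, which are in bijection with the (possibly empty) chains of $\widehat F_x$.

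Next I would analyze the induced differential. Consider a chain $\sigma=(x<y_1<\cdots<y_n)$ contributing a copy of $\F(x)$. Its face $\partial_0\sigma=(y_1<\cdots<y_n)$ lies in $\widehat F_x$, so the corresponding summand vanishes in the quotient, and the transition map $\F(x)\to\F(y_1)$ contributes nothing. For $i\geq 1$, the face $\partial_i\sigma$ still has $x$ as its minimum, so the transition map involved is the identity of $\F(x)$. Consequently, the differential of $C(F_x,\widehat F_x;\F|_{F_x})$ uses no data beyond the combinatorics of chains in $\widehat F_x$ and the identity of $\F(x)$.

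Putting these observations together, the relative chain complex $C(F_x,\widehat F_x;\F|_{F_x})$ is naturally isomorphic to $D_{\bullet}(\widehat F_x)\otimes \F(x)$, where $D_{\bullet}(\widehat F_x)$ is a universal complex built only from the poset structure of $\widehat F_x$ (essentially a shifted augmented simplicial chain complex of $\K(\widehat F_x)$). Any isomorphism $\varphi\colon\F(x)\to\G(x)$ of abelian groups then induces an isomorphism of chain complexes $C(F_x,\widehat F_x;\F|_{F_x})\to C(F_x,\widehat F_x;\G|_{F_x})$ by $\sigma\otimes a\mapsto\sigma\otimes\varphi(a)$, and hence an isomorphism on homology in every degree. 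There is no serious obstacle in this plan; the only point to verify carefully is that removing any $y_i$ with $i\geq 1$ from a chain containing $x$ leaves $x$ as the minimum, which is immediate from $y_i>x$.
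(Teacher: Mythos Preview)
Your argument is correct and is precisely the unpacking of the paper's one-line proof, which simply asserts that the chain complexes $C_\ast(F_x,\widehat F_x;\F|_{F_x})$ and $C_\ast(F_x,\widehat F_x;\G|_{F_x})$ are isomorphic. Your analysis of why the relative complex depends only on $\F(x)$ (chains not containing $x$ die in the quotient, and the $\partial_0$ face also dies so the transition maps never appear) is exactly the reason this works, and it matches the content of the remark immediately following the proposition in the paper.
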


\begin{proof}
Note that the chain complexes $C_\ast(F_x,\widehat F_x;\F|_{F_x})$ and $C_\ast(F_x,\widehat F_x;\G|_{F_x})$ are isomorphic. The result follows.
\end{proof}

From now on, if $X$ is a poset and $G$ is an abelian group, $c_G\colon X\to \Ab$ will denote the constant functor with value $G$ (in objects and the identity morphism in arrows).

\begin{rem} \label{rem_natural_transformation}
Let $X$ be a poset, let $x\in X$ and let $\F\colon X\to\Ab$ be a functor. Let $T\colon c_{\F(x)}\Rightarrow \F|_{F_x}$ be the natural transformation defined by $T_a=\F(x\leq a)$ for $a\in F_x$. Thus, $T$ induces a morphism of chain complexes $\widetilde{T}\colon C(F_x;\F(x))\to C(F_x;\F|_{F_x})$, which restricts to the identity map $C(F_x,\widehat{F}_x;\F(x))\to C(F_x,\widehat{F}_x;\F|_{F_x})$. Hence, we obtain isomorphisms
\begin{displaymath}
H_n(F_x,\widehat F_x;\F|_{F_x})\cong H_n(F_x,\widehat F_x;c_{\F(x)})\cong \widetilde H_{n-1}(\widehat{F}_x;\F(x))
\end{displaymath}
for all $n\in\Z$.
\end{rem}

The following result is analogous to Proposition 3.3 of \cite{cianci2017new} for homology of posets with functor coefficients.

\begin{proposition}\label{prop_hom_X_A_Cx}
Let $X$ be a poset, let $\F\colon X\to\Ab$ be a functor and let $A\subseteq X$ such that $X-A$ is an antichain. Then the inclusion maps $i_x\colon (C_x,\widehat{C}_x)\to(X,A)$, $x\in X-A$, induce isomorphisms
\[
\bigoplus_{x\in X-A}\!\!\!H_n(C_x,\widehat C_x;\F|_{C_x})\cong H_n(X,A;\F)
\]
for all $n\in \N_0$.
\end{proposition}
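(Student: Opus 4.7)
My plan is to upgrade the claimed isomorphism on homology to an isomorphism of chain complexes
\[
C(X,A;\F) \;\cong\; \bigoplus_{x\in X-A} C(C_x,\widehat{C}_x;\F|_{C_x}),
\]
realized by the inclusion maps $i_x$, from which the stated isomorphism on homology follows immediately by functoriality. This is the functor-coefficient analogue of the strategy used for Proposition~3.3 of~\cite{cianci2017new}, the new ingredient being the bookkeeping of coefficients under the decomposition.

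The combinatorial heart of the argument exploits the antichain hypothesis. Every chain $\sigma\in \ch_n(X)$ not entirely contained in $A$ meets $X-A$ in exactly one element $x_\sigma$: two distinct elements of $\sigma\cap (X-A)$ would be comparable, contradicting the antichain condition. The remaining elements of $\sigma$ lie in $A$ and are comparable to $x_\sigma$, so they belong to $U_{x_\sigma}\cup F_{x_\sigma}=C_{x_\sigma}$; hence $\sigma\subseteq C_{x_\sigma}$, and since $x_\sigma\in\sigma$ the chain is not contained in $\widehat{C}_{x_\sigma}$. Conversely, every chain in $\ch_n(C_x)\setminus \ch_n(\widehat{C}_x)$ must contain $x$ and pushes forward under inclusion to a chain in $\ch_n(X)\setminus \ch_n(A)$ with distinguished element $x$. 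The assignment $\sigma\mapsto (x_\sigma,\sigma)$ is therefore a bijection between the indexing sets of the two sides. Since $\min\sigma$ is intrinsic to $\sigma$, the coefficient group $\F(\min\sigma)$ is unambiguous, and the bijection upgrades at once to an isomorphism of graded abelian groups between the two chain complexes.

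The final step is to verify that this isomorphism commutes with the differentials. The boundary of a generator $(\sigma,a)$ is an alternating sum over the faces $\partial_i\sigma$, where only the face $\partial_0\sigma$ obtained by deleting the minimum requires applying a transition map $\F(\min\sigma\to\min\partial_0\sigma)$ to $a$. For each $i$ there are two possibilities: either $\partial_i\sigma$ still contains $x_\sigma$, in which case it lies in $\ch_{n-1}(C_{x_\sigma})\setminus \ch_{n-1}(\widehat{C}_{x_\sigma})$ and contributes to the $x_\sigma$-summand on both sides; or $\partial_i\sigma$ deletes $x_\sigma$, in which case it is contained in $A$ and equally in $\widehat{C}_{x_\sigma}$, so the term vanishes in both quotients. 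The point that deserves care is the case $\min\sigma\in A$: the transition map on $\partial_0\sigma$ then involves a comparison $\min\sigma<\min\partial_0\sigma$ between two elements which both lie in $C_{x_\sigma}$, so it is the same whether computed for $\F$ or for $\F|_{C_{x_\sigma}}$. I expect this uniform treatment of the cases $\min\sigma=x_\sigma$ and $\min\sigma\in A$ to be the only real obstacle; once it is carried out, the chain-complex isomorphism follows, is manifestly induced by the inclusions $i_x$, and passing to homology yields the claim.
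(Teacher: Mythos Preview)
Your proposal is correct and follows essentially the same approach as the paper: both establish an isomorphism of chain complexes $C(X,A;\F)\cong \bigoplus_{x\in X-A} C(C_x,\widehat{C}_x;\F|_{C_x})$, induced by the inclusions, by using the antichain hypothesis to associate to each chain $\sigma\notin\ch_n(A)$ the unique $x\in X-A$ it contains, and then checking compatibility with differentials. Your treatment of the boundary is somewhat more detailed than the paper's, but the argument is the same.
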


\begin{proof}
Let $n\in \N_0$. Since $X-A$ is an antichain, for every $\sigma\in \ch_n(X)-\ch_n(A)$ there exists a unique $x\in X-A$ such that $x\in \sigma$ or equivalently, such that $\sigma\in \ch_n(C_x)-\ch_n(\widehat{C}_x)$. This correspondence induces a canonical group homomorphism 
$$\phi_n\colon C_n(X,A;\F)\to \bigoplus\limits_{x\in X-A} C_n(C_x,\widehat{C}_x;\F|_{C_x})$$
which is easily seen to be an isomorphism whose inverse is induced by the inclusion maps $(C_x,\widehat{C}_x)\to(X,A)$.
  
Moreover, for every $n\in \N$ we have a commutative diagram
\[
\xymatrix@C=30pt@R=40pt{C_n(X,A;\F)\ar[d]_{\phi_{n}}\ar[rr]^{{d}_n}&&C_{n-1}(X,A;\F)\ar[d]^{\phi_{n-1}}\\
\bigoplus\limits_{x\in X-A}C_n(C_x,\widehat{C}_x;\F|_{C_x})\ar[rr]_{\bigoplus\limits_{x\in X-A}{d}_n^{x}}&&\bigoplus\limits_{x\in X-A}C_{n-1}(C_x,\widehat{C}_x;\F|_{C_x})}
\]
where $d_n\colon C_n(X,A;\F)\to C_{n-1}(X,A;\F)$ is the differential morphism of $C(X,A;\F)$, and where for every $x\in X-A$, the morphism ${d}^{x}_n$ is the differential morphism of $C(C_x,\widehat{C}_x;\F|_{C_x})$ which is just the restriction of ${d}_n$. 
Therefore, the complexes $C(X,A;\F)$ and $\bigoplus\limits_{x\in X-A} C(C_x,\widehat{C}_x;\F|_{C_x})$ are isomorphic and the result follows.
\end{proof}

The following is a simple example of application of the previous results.

\begin{ex} \label{ex_poset_V}
Let ${\bf V}$ be the poset with elements $a$, $b$ and $c$ and with order generated by the relations $a<b$ and $a<c$. Let $\F\colon {\bf V}\to \Ab$ be the functor defined by $\F(a)=\F(c)=\Z$, $\F(b)=0$ and $\F(a<c)(m)=2m$ for all $m\in\Z$, as is shown in the following diagram.
\begin{displaymath}
\begin{tikzpicture}[x=0.8cm,y=0.8cm]
\draw (1,0) node(a){$\bullet$} node[below=1]{$a$};
\draw (0,2) node(b){$\bullet$} node[above=1]{$b$};
\draw (2,2) node(c){$\bullet$} node[above=1]{$c$};
\draw (5,0) node(Fa){$\Z$};
\draw (4,2) node(Fb){$0$};
\draw (6,2) node(Fc){$\Z$};

\draw (a)--(b);
\draw (a)--(c);
\draw[-angle 60] (Fa)--(Fb);
\draw[-angle 60] (Fa)--(Fc) node [midway,right] {\small $\ldotp 2$};

\draw[|-angle 60] (2.5,1)--(3.5,1) node [midway,above] {$\F$};
\end{tikzpicture}
\end{displaymath}

Let $A=\{a,b\}$. By the arguments above we obtain that $H_n(A;\F|_A)=0$ for all $n\in\N_0$ and that 
\begin{displaymath}
H_n({\bf V},A;\F)=H_n(U_c,\widehat U_c;\F|_{U_c})=\left\{
\begin{array}{ll}
H_{n-1}(\widehat U_c;\F|_{\widehat U_c})=H_{n-1}(\widehat U_c;\Z) & \textnormal{if $n\geq 2$} \\
\ker (\F(a<c)) & \textnormal{if $n=1$} \\
\coker (\F(a<c)) & \textnormal{if $n=0$}
\end{array}
\right.
\end{displaymath}
Therefore, $H_n({\bf V};\F)=0$ if $n\geq 1$ and $H_0({\bf V};\F)=\Z_2$.
\end{ex}

Let $f\colon X\to Y$ be a continuous map between posets and let $\F\colon X\to \Ab$ be a functor. For each $q\in\N_0$ there is a functor $\mathcal{H}_q\colon Y\to \Ab$, induced by the map $f$ and the functor $\F$,  which is defined by $\mathcal{H}_q(y)=H_q(f^{-1}(U_y);\F|_{f^{-1}(U_y)})$ and where $\mathcal{H}_q(x<y)$ is the morphism induced by the inclusion map $f^{-1}(U_x)\hookrightarrow f^{-1}(U_y)$. Recall from \cite{quillen1978homotopy} that there is a spectral sequence
\[E^2_{p,q}=H_p(Y;\mathcal{H}_q) \Rightarrow H_{p+q}(X;\F)\ .\]
\label{spectral_sequence}
This spectral sequence can be combined with our tools to concretely compute the homology groups of posets. Consider the following example.

\begin{ex} \label{ex_proj_plane_2}
Let ${\bf P}$ be the poset defined by the following Hasse diagram
\begin{displaymath}
\begin{tikzpicture}[x=4cm,y=4cm]
	\tikzstyle{every node}=[font=\footnotesize]
	
	\foreach \x in {1,...,3} \draw (0.5*\x,1) node(a\x){$\bullet$} node[above=1]{$a_{\x}$};
	\foreach \x in {1,...,6} \draw (0.28*\x,0.5) node(b\x){$\bullet$} node[right=1]{$b_{\x}$};
	\foreach \x in {1,...,4} \draw (0.4*\x,0) node(c\x){$\bullet$} node[below=1]{$c_{\x}$};
	
	\foreach \x in {1,2,3,4} \draw (a1)--(b\x);
	\foreach \x in {1,2,5,6} \draw (a2)--(b\x);
	\foreach \x in {3,4,5,6} \draw (a3)--(b\x);
	
	\foreach \x in {1,3,6} \draw (c1)--(b\x);
	\foreach \x in {1,4,5} \draw (c2)--(b\x);
	\foreach \x in {2,3,5} \draw (c3)--(b\x);
	\foreach \x in {2,4,6} \draw (c4)--(b\x);
\end{tikzpicture}
\end{displaymath}
The poset ${\bf P}$ is a finite model of the real projective plane \cite{hardie2002nontrivial}, that is, a finite poset such that the geometric realization of its order complex is homotopy equivalent to the real projective plane.
In this example we will show an alternative way to compute the homology groups of this poset using the results of this section.

Let ${\bf V}$ be the poset of example \ref{ex_poset_V} and let $f\colon {\bf P}\to {\bf V}$ be the map that satisfies $f^{-1}(\{a\})=\widehat U^{\bf P}_{a_1}$, $f^{-1}(\{b\})=\{a_1\}$ and $f^{-1}(\{c\})=\{a_2,a_3,b_5,b_6\}$. Note that the map $f$ is continuous.

For each $q\in\N_0$, consider the functor $\mathcal{H}_q\colon {\bf V}\to \Ab$ defined by $\mathcal{H}_q(y)=H_q(f^{-1}(U_y);\Z)$. There is a spectral sequence 
\[E^2_{p,q}=H_p({\bf V};\mathcal{H}_q) \Rightarrow H_{p+q}({\bf P};\Z)\ .\]

Note that $f^{-1}(U_b)=U_{a_1}$ which is contractible and that $f^{-1}(\{a\})$ is a finite model of $S^1$. Note also that $f^{-1}(\{c\})$ is a strong deformation retract of $f^{-1}(U_c)$ since it can be obtained from $f^{-1}(U_c)$ by successively removing beat points. Hence $f^{-1}(U_c)$ is also a finite model of $S^1$. It is not difficult to prove that, up to isomorphism, the morphism $H_1(f^{-1}(\{a\}))\to H_1(f^{-1}(U_c))$ induced by the inclusion map is given by multiplication by 2.

Thus, the functor $\mathcal{H}_1\colon {\bf V}\to \Ab$ is the functor $\F$ of example \ref{ex_poset_V}. Note also that the functor $\mathcal{H}_0$ is the constant functor $\Z$ (with identity maps) and that the functors $\mathcal{H}_q$ for $q\geq 2$ are trivial. Hence, the second page of the spectral sequence above is given by $E^2_{0,0}=\Z$, $E^2_{0,1}=\Z_2$ and $E^2_{p,q}=0$ for $(p,q)\notin \{(0,0),(0,1)\}$. Therefore, $H_0({\bf P};\Z)=\Z$, $H_1({\bf P};\Z)=\Z_2$ and $H_n({\bf P};\Z)=0$ for $n\geq 2$, as expected.
\end{ex}

Example \ref{ex_proj_plane_2} can, in fact, be seen in a more general setting. 

Let $P$ be a finite poset, let $\mathcal{FP}$ be the category of finite posets and let $D\colon P\to\mathcal{FP}$ be a (covariant) functor. In  \cite{fernandez2016homotopy}, Fern\'andez and Minian defined the \emph{non-Hausdorff homotopy colimit of $D$} (denoted by $\nhhocolim D$) as the Grothendieck construction on $D$ and observed that $\hocolim |\K D|$ and $|\K(\nhhocolim D)|$ are homotopy equivalent by Thomason's theorem.

In the following proposition we give a spectral sequence which converges to the homology groups of the non-Hausdorff homotopy colimit of a diagram $D\colon P\to\mathcal{FP}$.

\begin{proposition} \label{prop_hocolim_1}
Let $P$ be a finite poset and let $D\colon P\to\mathcal{FP}$ be a functor. Then there is a spectral sequence 
\[ E^2_{p,q}=H_p(P;\mathcal{H}_q) \Rightarrow H_{p+q}(\nhhocolim D;\Z) \]
where, for each $q\in\Z$, the functor $\mathcal{H}_q\colon P\to \Ab$ is defined as the composition $H_q(D(\_);\Z)$.
\end{proposition}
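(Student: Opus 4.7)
The plan is to apply Quillen's spectral sequence to the projection map $\pi\colon \nhhocolim D\to P$, $(p,x)\mapsto p$, which is easily seen to be order-preserving, taking $\F$ to be the constant functor $c_\Z$. The abutment is then $H_{p+q}(\nhhocolim D;\Z)$, so it remains to identify the functor $\mathcal{H}_q\colon P\to \Ab$ defined by $\mathcal{H}_q(y)=H_q(\pi^{-1}(U_y);\Z)$ with $H_q(D(-);\Z)$, naturally in $y$.

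Unpacking the Grothendieck construction, $\pi^{-1}(U_y)=\{(p,x):p\leq y,\ x\in D(p)\}=\nhhocolim(D|_{U_y})$. I will introduce two order-preserving maps: the inclusion $\iota_y\colon D(y)\to \pi^{-1}(U_y)$, $x\mapsto (y,x)$, and the retraction $r_y\colon \pi^{-1}(U_y)\to D(y)$, $(p,x)\mapsto D(p\leq y)(x)$. Order-preservation of $r_y$ follows from functoriality: if $(p,x)\leq (p',x')$ then $D(p\leq y)(x)=D(p'\leq y)(D(p\leq p')(x))\leq D(p'\leq y)(x')$. Clearly $r_y\circ \iota_y=\id_{D(y)}$, while the inequality $(p,x)\leq (y,D(p\leq y)(x))=\iota_y r_y(p,x)$ shows $\id\leq \iota_y\circ r_y$ in the pointwise order of maps. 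By the classical fact that pointwise-comparable order-preserving maps of Alexandroff $T_0$--spaces are homotopic, $\iota_y$ and $r_y$ are mutually inverse homotopy equivalences; in particular $\mathcal{H}_q(y)\cong H_q(D(y);\Z)$.

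For naturality, given $y_1\leq y_2$ in $P$, Quillen's definition sets $\mathcal{H}_q(y_1\leq y_2)$ equal to the map induced on $H_q$ by the inclusion $\inc\colon \pi^{-1}(U_{y_1})\hookrightarrow \pi^{-1}(U_{y_2})$. The key computation is the strict equality of order-preserving maps $r_{y_2}\circ \inc\circ \iota_{y_1}=D(y_1\leq y_2)$, which is immediate from the definitions: $x\mapsto (y_1,x)\mapsto (y_1,x)\mapsto D(y_1\leq y_2)(x)$. Taking homology and using that $(\iota_{y_i})_*$ is the inverse of $(r_{y_i})_*$ yields the required commutative square, so $\mathcal{H}_q\cong H_q(D(-);\Z)$ as functors $P\to \Ab$.

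The main point requiring care is the invocation of the order-homotopy principle for finite $T_0$--spaces---that $f\leq g$ pointwise implies $f\simeq g$---which, although not explicitly recorded in the preliminaries, is a standard consequence of Stong's theory of beat points and the characterization of homotopy equivalences of finite $T_0$--spaces; an alternative route avoiding this invocation is to cite the Fern\'andez--Minian identification $|\K(\nhhocolim(D|_{U_y}))|\simeq \hocolim_{U_y}|\K D|$ and the fact that a homotopy colimit over a poset with a terminal object is equivalent to the value at the terminal object.
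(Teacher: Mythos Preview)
Your proof is correct and follows essentially the same approach as the paper: both apply Quillen's spectral sequence to the projection $\nhhocolim D\to P$, identify $\pi^{-1}(U_y)$ with $\nhhocolim(D|_{U_y})$, and use the order-homotopy principle (comparable maps are homotopic) to compare this with $D(y)$ naturally in $y$. The only cosmetic difference is that the paper cites Fern\'andez--Minian for the homotopy equivalence $D(y)\simeq\nhhocolim(D|_{U_y})$ and checks naturality via the inequality $j\circ i_x\leq i_y\circ D(x<y)$, whereas you construct the deformation retraction $r_y$ explicitly and obtain naturality from the strict equality $r_{y_2}\circ\inc\circ\iota_{y_1}=D(y_1\leq y_2)$.
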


\begin{proof}
Let $f\colon \nhhocolim D \to P$ be the projection map associated to the Grothendieck construction $\nhhocolim D$ on $D$. Note that, for $y\in P$, $f^{-1}(U_y)=\nhhocolim(D|_{U_y})$, and since the poset $U_y$ has maximum element $y$, from \cite[Corollary 2.5]{fernandez2016homotopy} we obtain that $\nhhocolim(D|_{U_y})$ is homotopy equivalent to $D(y)$ and that the inclusion $i_y\colon D(y)\to \nhhocolim(D|_{U_y})$ is a homotopy equivalence.

Now, given $x,y\in P$ with $x<y$ consider the diagram
\begin{displaymath}
\begin{tikzpicture}[x=4cm,y=2cm]
\draw (0,1) node(Dx){$D(x)$};
\draw (1,1) node(Dy){$D(y)$};
\draw (0,0) node(hocolimDUx){$\nhhocolim(D|_{U_x})$};
\draw (1,0) node(hocolimDUy){$\nhhocolim(D|_{U_y})$};

\draw[-angle 60] (Dx)--(Dy) node [midway,above] {\small $D(x<y)$};
\draw[-angle 60] (Dx)--(hocolimDUx) node [midway,left] {\small $i_x$};
\draw[-angle 60] (Dy)--(hocolimDUy) node [midway,right] {\small $i_y$};
\draw[-angle 60] (hocolimDUx)--(hocolimDUy) node [midway,below] {\small $j$};
\end{tikzpicture}
\end{displaymath}
where the maps $i_x$, $i_y$ and $j$ are the inclusion maps. Since $j \circ i_x\leq i_y \circ D(x<y)$, we obtain that the previous diagram is homotopy commutative. Thus, for all $q\in \N_0$, the functor defined by $y\mapsto H_q(f^{-1}(U_y);\Z)$ coincides with the functor $H_q(D(\_);\Z)$. The result follows.
\end{proof}

Observe that the poset ${\bf P}$ of example \ref{ex_proj_plane_2} can be seen as a non-Hausdorff homotopy colimit in which the map $f\colon {\bf P}\to {\bf V}$ is the projection map.

The previous proposition can be rephrased in terms of basis-like open covers as follows (cf. remark \ref{rem_hocolim}).

\begin{proposition} \label{prop_hocolim_2}
Let $X$ be a finite $T_0$--space and let $\mathcal{V}$ be a basis-like open cover of $X$ with a finite number of elements. We regard $\mathcal{V}$ as a finite poset with the inclusion order. Then there is a spectral sequence 
\[E^2_{p,q}=H_p(\mathcal{V};\mathcal{H}_q) \Rightarrow H_{p+q}(X;\Z)\]
where the functor $\mathcal{H}_q\colon \mathcal{V}\to \Ab$ is defined by $\mathcal{H}_q(V)=H_q(V;\Z)$.
\end{proposition}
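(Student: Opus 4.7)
The plan is to apply Quillen's spectral sequence (recalled just before Example~\ref{ex_proj_plane_2}) to a suitable continuous map $\sigma\colon X\to\mathcal{V}$ with coefficients in the constant functor $\F=c_{\Z}$. The map $\sigma$ will be designed so that $\sigma^{-1}(U_V)=V$ for every $V\in\mathcal{V}$, ensuring that the functor $\mathcal{H}_q\colon\mathcal{V}\to\Ab$ produced by Quillen's spectral sequence coincides with the one appearing in the statement.

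First I would exploit the finiteness of $\mathcal{V}$ together with the basis-like hypothesis to show that for every $x\in X$ the collection $\mathcal{V}_x=\{V\in\mathcal{V}:x\in V\}$ has a minimum element under inclusion. The basis-like property yields, for any $V_1,V_2\in\mathcal{V}_x$, some $V_3\in\mathcal{V}_x$ with $V_3\subseteq V_1\cap V_2$; applying this finitely many times to the elements of $\mathcal{V}_x$ produces $V_x^{\ast}\in\mathcal{V}_x$ with $V_x^{\ast}\subseteq V$ for every $V\in\mathcal{V}_x$. Setting $\sigma(x):=V_x^{\ast}$, I would then check that $\sigma$ is order-preserving: if $x\le y$ in $X$, then $x\in U_y\subseteq V_y^{\ast}$ (the second inclusion because $V_y^{\ast}$ is open and contains $y$), hence $V_y^{\ast}\in\mathcal{V}_x$ and by minimality $V_x^{\ast}\subseteq V_y^{\ast}$, i.e.\ $\sigma(x)\le\sigma(y)$ in $\mathcal{V}$.

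A direct calculation then gives $\sigma^{-1}(U_V)=V$ for every $V\in\mathcal{V}$: if $x\in V$ then $V\in\mathcal{V}_x$ forces $\sigma(x)\subseteq V$, and conversely $\sigma(x)\subseteq V$ implies $x\in\sigma(x)\subseteq V$. Feeding $\sigma$ and $\F=c_{\Z}$ into Quillen's spectral sequence then yields $E^2_{p,q}=H_p(\mathcal{V};\mathcal{H}_q)\Rightarrow H_{p+q}(X;c_{\Z})=H_{p+q}(X;\Z)$ with $\mathcal{H}_q(V)=H_q(\sigma^{-1}(U_V);c_{\Z}|_{\sigma^{-1}(U_V)})=H_q(V;\Z)$, as required. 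I expect the main technical point to be the first step, namely producing the minimum $V_x^{\ast}$: this is exactly where the basis-like hypothesis and the finiteness of $\mathcal{V}$ are both needed, since for an infinite basis-like cover $\mathcal{V}_x$ need not have a minimum in general.
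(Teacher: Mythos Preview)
Your proof is correct and follows essentially the same route as the paper: define the map $x\mapsto\min\{V\in\mathcal{V}:x\in V\}$, check that it is order-preserving with $\sigma^{-1}(U_V)=V$, and apply Quillen's spectral sequence with the constant functor $c_{\Z}$. The only difference is that you spell out in detail the existence of the minimum, the order-preservation, and the preimage computation, whereas the paper asserts these without justification.
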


\begin{proof}
Since $\mathcal{V}$ is a basis-like open cover, for each $x\in X$ the set $\{V\in\mathcal{V}\tq x\in V\}$ has a minimum. Let $f\colon X\to \mathcal{V}$ be defined by $f(x)=\min\{V\in\mathcal{V}\tq x\in V\}$. Clearly, $f$ is order-preserving and $f^{-1}(U_V)=V$ for all $V\in\mathcal{V}$. The result follows applying the Grothendieck spectral sequence.
\end{proof}

It is not difficult to prove that this spectral sequence is a generalization of the Mayer-Vietoris exact sequence.

\begin{rem}
\label{rem_hocolim}
Note that if $X$ is a finite topological space and $\mathcal{V}$ is a basis-like open cover of $X$ then $X$ is homotopy equivalent to the non-Hausdorff homotopy colimit of the diagram $D\colon \mathcal{V}\to \mathcal{FP}$ defined by $D(V)=V$ in which the morphisms are the inclusion maps. Indeed, let $f\colon X\to \mathcal{V}$ be defined as in the previous proof and let $i\colon X\to \nhhocolim D$ and $r\colon \nhhocolim D \to X$ be defined by $i(x)=(f(x),x)$ and $r(V,x)=x$. Then $i$ and $r$ are order-preserving and satisfy $ri=\id_X$ and $ir\leq \id_{\nhhocolim D}$.

Conversely, if $X$ is the non-Hausdorff homotopy colimit of a diagram $D\colon P\to \mathcal{FP}$, where $P$ is a finite poset, and $f\colon X\to P$ is the projection map then $\{f^{-1}(U_y)\tq y\in P\}$ is a finite basis-like open cover of $X$. This correspondence gives the analogy between propositions \ref{prop_hocolim_1} and \ref{prop_hocolim_2}.
\end{rem}

\subsection*{A Serre spectral sequence for posets}

In this subsection we will apply the spectral sequence given in page \pageref{spectral_sequence} to obtain a version of the Serre spectral sequence with local coefficients for posets.

Recall that a local coefficient system on a topological space $B$ is a functor $\Pi(B)\to \Ab$ where $\Pi(B)$ is the fundamental groupoid of $B$. If $B$ is a poset, then $\Pi(B)$ can be canonically identified with the localization of $B$ with respect to all its morphisms (\cite{cianci2019coverings}) and therefore every morphism-inverting functor $B\to \Ab$ can be factored uniquely as $B\to \Pi(B)\to \Ab$. Hence there is a one-to-one correspondence between morphism-inverting functors from a poset $B$ to $\Ab$ and local coefficient systems on $B$.

Let $B$ be a poset, let $b_0\in B$ and let $\pi=\pi_1(B,b_0)$. Let $\rho\colon B\to \Ab$ be a morphism-inverting functor, let $\overline{\rho}\colon \Pi(B)\to \Ab$ be the corresponding local coefficient system and let $A=\rho(b_0)$.
Then $\overline{\rho}$ induces a group homomorphism $\rho'\colon \pi \to \Aut(A)$ and an associated left $\Z[\pi]$--module structure on $A$ defined (on generators of $\Z[\pi]$) by $g.a=\rho'(g)(a)$ for $g\in \pi$ and $a\in A$. As it is customary, we will write $A_{\rho'}$ instead of $A$ to emphasize the dependence of the module structure of $A$ on the functor $\rho'$. 

Let $\widetilde{B}$ be the universal cover of $B$ and regard $C(\widetilde{B};\Z)$ as a right $\Z[\pi]$--module with multiplication induced by the action of $\pi$ on $\widetilde{B}$.
The (simplicial) homology groups of $B$ with local coefficients in the $\Z[\pi]$--module $A_{\rho'}$ are defined as the homology groups of the chain complex
$$C'(B;A_{\rho'})=C(\widetilde{B};\Z)\otimes_{\Z[\pi]} A_{\rho'}\ .$$

Now, let $\phi\colon \Pi(B)\to \pi$ be an equivalence of categories. It is well known that $\widetilde{B}$ is isomorphic to the poset whose underlying set is $B\times\pi$ with $(x,g)\leq (y,h)$ if and only if $x\leq y$ and $h\phi(x\leq y)=g$ (see \cite{cianci2018splitting} for details). With this characterization of $\widetilde{B}$ it is easy to see that $C'(B;A_{\rho'})\cong C(B;\rho)$ and hence the homology groups of $B$ with local coefficients in $A_{\rho'}$ are naturally isomorphic to the homology groups of $B$ with coefficients in the functor $\rho$ (cf. \cite{eilenberg1947homology}).

The next proposition, which follows easily from the previously stated facts, gives a version of the Serre spectral sequence with local coefficients for posets.

\begin{proposition}
  Let $X$ and $B$ be posets and let $G$ be an abelian group. Let $b_0\in B$ and let $f\colon X\to B$ be a continuous function such that the inclusion $f^{-1}(U_b)\hookrightarrow f^{-1}(U_{b'})$ is a weak homotopy equivalence for every $b,b'\in B$ such that $b\leq b'$. 
  
  Let $\mathcal{H}_q\colon B\to \Ab$ be the (morphism-inverting) functor defined by \[\mathcal{H}_q(b)=H_q(f^{-1}(U_b);G)\] for every $b\in B$ and \[\mathcal{H}_q(b\leq b')=H_q(f^{-1}(U_b)\hookrightarrow f^{-1}(U_{b'});G)\] for every $b,b'\in B$ such that $b\leq b'$.
  
  Then there is a spectral sequence 
  \[E^2_{p,q}=H_p(B;A^q_{\rho_q}) \Rightarrow H_{p+q}(X;G)\]
  where $\rho_q\colon \Pi(B)\to \Ab$ is the local coefficient system induced by $\mathcal{H}_q$ and $A^q_{\rho_q}=\rho_q(b_0)$ is the left $\Z[\pi]$--module associated to $\rho_q$ for every $q\geq 0$.
\end{proposition}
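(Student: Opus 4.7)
The plan is to obtain this spectral sequence as an immediate consequence of Quillen's spectral sequence, once we show that the hypothesis on $f$ forces the relevant coefficient functors to be morphism-inverting so that they can be identified with local coefficient systems.

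First, I would specialize Quillen's spectral sequence (the one recalled on page \pageref{spectral_sequence}) to the map $f\colon X\to B$ with functor coefficient $c_G\colon X\to \Ab$, the constant functor on $G$. This gives a spectral sequence
\[E^2_{p,q}=H_p(B;\widetilde{\mathcal{H}}_q) \Rightarrow H_{p+q}(X;c_G)=H_{p+q}(X;G),\]
where $\widetilde{\mathcal{H}}_q\colon B\to \Ab$ is defined by $\widetilde{\mathcal{H}}_q(b)=H_q(f^{-1}(U_b);c_G|_{f^{-1}(U_b)})=H_q(f^{-1}(U_b);G)$ and where the morphisms $\widetilde{\mathcal{H}}_q(b\leq b')$ are induced by the inclusions $f^{-1}(U_b)\hookrightarrow f^{-1}(U_{b'})$. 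Thus $\widetilde{\mathcal{H}}_q$ coincides with the functor $\mathcal{H}_q$ in the statement.

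Second, I would use the hypothesis: since the inclusion $f^{-1}(U_b)\hookrightarrow f^{-1}(U_{b'})$ is a weak homotopy equivalence for every $b\leq b'$, the induced morphism $\mathcal{H}_q(b\leq b')$ is an isomorphism for every $q\geq 0$. Hence $\mathcal{H}_q$ is morphism-inverting and, by the discussion preceding the proposition, it factors uniquely through the fundamental groupoid $\Pi(B)$ as a local coefficient system $\rho_q\colon \Pi(B)\to \Ab$, with $\rho_q(b_0)=A^q_{\rho_q}$.

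Third, I would invoke the identification recalled just before the statement: for any morphism-inverting functor $\rho\colon B\to \Ab$, the chain complex $C(B;\rho)$ is naturally isomorphic to the local-coefficient chain complex $C'(B;A_{\rho'})=C(\widetilde{B};\Z)\otimes_{\Z[\pi]}A_{\rho'}$, and therefore their homology groups agree. Applied to $\rho=\mathcal{H}_q$, this yields natural isomorphisms $H_p(B;\mathcal{H}_q)\cong H_p(B;A^q_{\rho_q})$ for all $p,q\geq 0$. Substituting this into the second page of Quillen's spectral sequence produces the desired spectral sequence.

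There is no real obstacle; the work has already been done in the preceding paragraphs of the section. The only point to be careful about is that the isomorphism $C'(B;A_{\rho'})\cong C(B;\rho)$ is natural enough to yield an isomorphism of the whole $E^2$ pages (not just of individual groups), which follows from the naturality of the construction in $\rho$ established earlier.
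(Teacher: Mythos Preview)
Your proposal is correct and follows exactly the approach of the paper: the paper's proof is the single line ``Follows from the natural isomorphisms $H_p(B;\mathcal{H}_q)\cong H_p(B;A^q_{\rho_q})$ for $p\geq 0$,'' relying on the preceding discussion for Quillen's spectral sequence and the identification of morphism-inverting functor coefficients with local coefficients. You have simply spelled out the implicit steps in more detail.
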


\begin{proof}
Follows from the natural isomorphisms $H_p(B;\mathcal{H}_q)\cong H_p(B;A^q_{\rho_q})$ for $p\geq 0$.
\end{proof}

\begin{ex}

Let $X$ be the following poset, which is a finite model of the Klein bottle \cite{cianci2018splitting}:
  \begin{center}
    \begin{tikzpicture}[x=4cm,y=4cm]
      \tikzstyle{every node}=[font=\footnotesize]
      \foreach \x in {1,...,4} \draw (0.4*\x,1) node(a\x){$\bullet$} node[above=1]{$a_{\x}$};
      \foreach \x in {1,...,8} \draw (0.22*\x,0.5) node(b\x){$\bullet$} node[right=1]{$b_{\x}$};
      \foreach \x in {1,...,4} \draw (0.4*\x,0) node(c\x){$\bullet$} node[below=1]{$c_{\x}$};
      \foreach \x in {1,2,3,4} \draw (a1)--(b\x);
      \foreach \x in {1,2,5,6} \draw (a2)--(b\x);
      \foreach \x in {3,5,7,8} \draw (a3)--(b\x);
      \foreach \x in {4,6,7,8} \draw (a4)--(b\x);
      \foreach \x in {1,4,5,7} \draw (c1)--(b\x);
      \foreach \x in {2,4,5,8} \draw (c2)--(b\x);
      \foreach \x in {1,3,6,7} \draw (c3)--(b\x);
      \foreach \x in {2,3,6,8} \draw (c4)--(b\x);
    \end{tikzpicture}
  \end{center}
  Let $B$ be the poset
    \begin{center}
      \begin{tikzpicture}[x=2cm,y=2cm]
      \tikzstyle{every node}=[font=\footnotesize]
	\draw (0,0) node(alpha){$\bullet$} node[below=1]{$\alpha$};
	\draw (1,0) node(beta){$\bullet$} node[below=1]{$\beta$};
	\draw (0,1) node(gamma){$\bullet$} node[above=1]{$\gamma$};
	\draw (1,1) node(delta){$\bullet$} node[above=1]{$\delta$};
	
	\draw (alpha)--(gamma);
	\draw (alpha)--(delta);
	\draw (beta)--(gamma);
	\draw (beta)--(delta);      
	
    \end{tikzpicture}
  \end{center}
and let $p\colon X\to B$ be the function defined by $p^{-1}(\alpha)=\{b_4,b_5,c_1,c_2\}$, $p^{-1}(\beta)=\{b_3,b_6,c_3,c_4\}$, $p^{-1}(\gamma)=\{a_1,a_2,b_1,b_2\}$ and $p^{-1}(\delta)=\{a_3,a_4,b_7,b_8\}$. It is easy to see that $p^{-1}(\alpha)\hookrightarrow X\xrightarrow{p} B$ is a fiber bundle and hence $p$ is a Hurewicz fibration \cite[Theorem 4.4]{cianci2019classification}. Observe that $p^{-1}(\alpha)$ and $B$ are finite models of $S^{1}$.

Let $\F\colon X\to \Ab$ be the constant $\Z$ functor. It is clear that $\mathcal{H}_0\colon B\to \Ab$ is the constant $\Z$ functor and therefore 
$H_p(B;\mathcal{H}_0)=\Z$ for $p=0,1$ and $H_p(B;\mathcal{H}_0)=0$ for $p\geq 2$.
  
On the other hand, the functor $\mathcal{H}_1\colon B\to \Ab$ is given by the diagram 
  \begin{center}
    \begin{tikzpicture}[x=2cm,y=2cm]
    \tikzstyle{every node}=[font=\footnotesize]
	\draw (0,0) node(alpha){$\Z$};
	\draw (1,0) node(beta){$\Z$};
	\draw (0,1) node(gamma){$\Z$};
	\draw (1,1) node(delta){$\Z$};
	
	\draw[->] (alpha)->(gamma);
	\draw[->] (alpha)->(delta);
	\draw[->] (beta)->(gamma);
	\draw[->] (beta)->(delta);      
	
	\draw (0,0.5) node[left]{1};
	\draw (1,0.5) node[right]{1};
	\draw (0.25,0.25) node[right]{-1};
	\draw (0.75,0.25) node[left]{1};
  \end{tikzpicture}
\end{center}
and a direct computation shows that $H_0(B;\mathcal{H}_1)=\Z_2$ and $H_p(B;\mathcal{H}_1)=0$ for $p\geq 1$.
  
Since $\mathcal{H}_n$ is trivial for $n\geq 2$, it follows that $H_0(X;\Z)=\Z$, $H_1(X;\Z)=\Z\oplus\Z_2$ and $H_n(X;\Z)=0$ for $n\geq 2$. 
\end{ex}

\section{Khovanov homology of knots}

In this section we will apply our results to show that Khovanov homology of knots is a special instance of homology with functor coefficients, giving an alternative and more conceptual proof to a similar result given by Everitt and Turner \cite{everitt2009homology}. 

Let $P$ be a poset with maximum element $1$ and let $\F\colon P\to\Ab$ be a functor.

We recall the following definition from \cite{cianci2017new}.

\begin{definition}\label{def_quasicellular}
Let $X$ be a locally finite $T_0$--space (or poset). We say that $X$ is \emph{quasicellular} if there exists an order preserving map $\rho\colon X\longrightarrow \N_0$, which will be called \emph{quasicellular morphism for $X$}, such that
\begin{enumerate}[(1)]
\item The set $\{x\in X:\rho(x)=n\}$ is an antichain for every $n\in \N_0$. 
\item For every $x\in X$, the reduced homology of $\widehat{U}_x$ is concentrated in degree $\rho(x)-1$.
\end{enumerate}
\end{definition}

Note that if $X$ is a quasicellular poset, $\rho$ is a quasicellular morphism for $X$ and $x,y\in X$, then $x<y$ implies that $\rho(x)<\rho(y)$. Note also that if $X$ is a quasicellular poset, then $\widetilde{H}_{\rho(x)-1}(\widehat U_x)$ is finitely generated for all $x\in X$.

If $X$ is a poset, $X^\op$ will denote the poset $X$ with the inverse order and will be called the \emph{opposite} poset of $X$.

\begin{theo} \label{theo_homology_quasicellular_opposite}
Let $P$ be a poset such that $P^\op$ is a quasicellular poset with quasicellular morphism $\rho\colon P^\op\to \N_0$ satisfying that $\widetilde{H}_{\rho(x)-1}(\widehat U_x^{P^\op})$ is a free abelian group for all $x\in P$. For each $n\in\Z$ let $D_n=\{x\in X \tq \rho(x)=n\}=\rho^{-1}(\{n\}\cap\N_0)$.

Let $\F\colon P\to\Ab$ be a functor. 
Let $\mathcal{C}^{P,\F}=(C^{P,\F}_\ast,d^{P,\F}_\ast)$ be the chain complex defined by 
\begin{displaymath}
\displaystyle C^{P,\F}_n=\bigoplus\limits_{x\in D_n}\!\!\!\widetilde{H}_{n-1}(\widehat{F}_x;\F(x))
\end{displaymath}
for all $n\in\Z$ with differentials $d^{P,\F}_n\colon C^{P,\F}_n\to C^{P,\F}_{n-1}$ defined by
\begin{displaymath}
d^{P,\F}_n\left(\left(\left[\sum\limits_{i=1}^{l_x}a^x_i s^x_i\right]\right)_{x\in D_n}\right)=\left(\left[\sum\limits_{x\in D_n}\sum\limits_{s^x_i\ni y}\F(x\leq y)(a^x_i)\ldotp(s^x_i-\{y\})\right]\right)_{y\in D_{n-1}}
\end{displaymath}
where for every $x\in D_n$, $l_x\in \N$, and for every $i=\{1,\dots,l_x\}$, $a^x_i\in \F(x)$ and $s^x_i\in \ch_{n-1}(\widehat{F}_x)$.

Then $H_n(P;\F)\cong H_n(\mathcal{C}^{P,\F})$ for all $n\in \Z$.
\end{theo}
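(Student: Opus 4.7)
The plan is to filter the chain complex $C_\ast(P;\F)$ according to the $\rho$-value of the minimum of each chain and to show that the resulting spectral sequence collapses to give $\mathcal{C}^{P,\F}$. Concretely, for each $n\in\Z$ define
\[
F_n C_k(P;\F) \;=\; \bigoplus_{\substack{\sigma\in\ch_k(P)\\ \rho(\min\sigma)\le n}} \F(\min\sigma).
\]
Since $\rho$ is order-preserving on $P^{\op}$ it is order-reversing on $P$; for $\sigma=\{x_0<\cdots<x_k\}$ the face $d_0\sigma$ has minimum $x_1$ with $\rho(x_1)<\rho(x_0)$ (strict, since $D_{\rho(x_0)}$ is an antichain and $x_0<x_1$), while every other face has the same minimum $x_0$. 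Hence $dF_n\subseteq F_n$, and $\{F_n\}$ is an increasing, exhaustive filtration with $F_{-1}=0$.

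Next I would analyze the subquotients. A chain $\sigma\in F_n\setminus F_{n-1}$ of length $k\ge1$ has the form $\{x\}\cup\tau$ with $x\in D_n$ and $\tau\in\ch_{k-1}(\widehat{F}_x)$, while the contributing $0$-chains are the singletons $\{x\}$ with $x\in D_n$. Since $d_0\sigma\in F_{n-1}$, only the faces $d_i\sigma$ with $i\ge1$ survive in the quotient, and a direct bookkeeping identifies $(F_n/F_{n-1})_\ast$, up to an overall sign in the differential, with $\bigoplus_{x\in D_n}\widetilde{C}_{\ast-1}(\widehat{F}_x;c_{\F(x)})$, where $\widetilde{C}$ denotes the augmented chain complex. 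Consequently
\[
H_k(F_n/F_{n-1})\;\cong\;\bigoplus_{x\in D_n}\widetilde{H}_{k-1}(\widehat{F}_x;\F(x)).
\]
By the quasicellular hypothesis applied to $P^{\op}$, together with the identification $\widehat{U}_x^{P^{\op}}=\widehat{F}_x^{P}$, one has $\widetilde{H}_j(\widehat{F}_x;\Z)=0$ for $j\ne n-1$; the freeness of $\widetilde{H}_{n-1}(\widehat{F}_x;\Z)$ then kills the potential $\operatorname{Tor}$ contribution when changing coefficients to $\F(x)$, so $\widetilde{H}_{k-1}(\widehat{F}_x;\F(x))$ vanishes unless $k=n$. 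Therefore
\[
E^1_{p,q}\;=\;H_{p+q}(F_p/F_{p-1})\;=\;\begin{cases} C^{P,\F}_p & q=0,\\ 0 & q\ne 0.\end{cases}
\]

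Because $E^1$ is concentrated on the single row $q=0$, all higher differentials land in zero and the spectral sequence collapses at $E^2$, so $H_p(P;\F)\cong H_p(E^1_{\ast,0},d^1)$. To conclude, I would verify that $d^1$ coincides with the map $d^{P,\F}$ of the statement: given a representative $\bigl(\sum_i a^x_i s^x_i\bigr)_{x\in D_n}$, lifting it to $\widetilde c=\sum_{x,i}a^x_i\cdot(\{x\}\cup s^x_i)\in F_n$ and computing $d\widetilde c$ directly, the formula $d(\{x\}\cup s)=\F(x\le\min s)\cdot s\pm\{x\}\cup d^{\widehat{F}_x}(s)$ shows that the second summand vanishes on a reduced cycle, leaving $\sum_{x,i}\F(x\le\min s^x_i)(a^x_i)\cdot s^x_i$; writing each surviving $s^x_i$ as $\{y\}\cup(s^x_i-\{y\})$ with $y=\min s^x_i\in D_{n-1}$ then yields the expression in the theorem.

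The main obstacle is the careful sign and index bookkeeping in the identification of $(F_n/F_{n-1})_\ast$ with the shifted augmented complex, and the final comparison between the spectral-sequence differential and the formula given for $d^{P,\F}$. Convergence of the spectral sequence is unproblematic since the filtration is bounded below ($F_{-1}=0$) and each $E^1_{p,q}$ vanishes off a single row, so the abutment is identified with the homology of the collapsed complex $\mathcal{C}^{P,\F}$.
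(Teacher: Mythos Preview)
Your proposal is correct and essentially the same as the paper's proof. Indeed, your filtration $F_n C_\ast(P;\F)$ coincides with $C_\ast(P_n;\F|_{P_n})$ where $P_n=\rho^{-1}(\{0,\ldots,n\})$ (a chain lies in $P_n$ iff its minimum does, by the strict monotonicity of $\rho$), and the paper explicitly says one may argue ``either working as in the case of cellular homology of CW--complexes or applying a spectral sequence argument''. The only small point left implicit in your sketch of the $d^1$ computation is why each surviving term $s^x_i$ has $\min s^x_i\in D_{n-1}$; this follows from the same pigeonhole argument ($\rho$ is strictly decreasing along an $(n-1)$-chain of $\widehat{F}_x$ with values in $\{0,\ldots,n-1\}$), and is what makes your formula match the statement's condition ``$s^x_i\ni y$'' for $y\in D_{n-1}$.
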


\begin{proof}
For each $m\in\N_0$ let $P_m=\rho^{-1}(\{0,\ldots,m\})$ with the order induced by the one in $P$. Applying propositions \ref{prop_hom_X_A_Cx} and \ref{prop_hom_Fx} we obtain that
\begin{displaymath}
H_n(P_m,P_{m-1};\F|) \cong \bigoplus_{x\in D_m}H_n(F_x,\widehat F_x,\F|)\cong \bigoplus_{x\in D_m}\widetilde H_{n-1}(\widehat F_x,\F(x))
\end{displaymath}
It follows that if $l,m,n\in \N_0$ are such that either $l<m<n$ or $n<l<m$ then the inclusion $P_l\to P_m$ induces an isomorphism $H_n(P_l;\F|)\to H_n(P_m;\F|)$. In particular, we obtain that $H_n(P_m;\F|)\cong H_n(P_0;\F|)=0$ if $n>m$. On the other hand, it is easy to check that $H_n(P;\F)=\colim\limits_{m\in\N_0} H_n(P_m;\F|)$ for all $n\in\N_0$ since any cycle is contained in $P_m$ for some $m$.

Therefore, either working as in the case of cellular homology of CW--complexes or applying a spectral sequence argument one obtains a chain complex $C'=(C'_\ast,d'_\ast)$ defined by $C'_n=H_n(P_n,P_{n-1};\F|)$ for all $n\in\Z$ with differentials $d_n\colon C'_n\to C'_{n-1}$ defined by the composition $H_n(P_n,P_{n-1};\F|)\to H_{n-1}(P_{n-1};\F|)\to H_{n-1}(P_{n-1},P_{n-2};\F|)$ such that $H_n(P;\F)\cong H_n(C')$ for all $n\in \Z$.

Since $C'_n=H_n(P_n,P_{n-1};\F|)\cong \bigoplus\limits_{x\in D_m}\widetilde H_{n-1}(\widehat F_x,\F(x))$, it remains to prove that under this isomorphisms the differentials are given by the formula stated in the theorem.

Let $n\in \N$. Consider the following commutative diagram
\begin{displaymath}
\xymatrix@C=38pt@R=40pt{H_n(P_n,P_{n-1};\F|_{P_n})\ar@/^2pc/[rr]^-{d'_{n}}\ar[r]^-{\partial}
& H_{n-1}(P_{n-1};\F|_{P_{n-1}})\ar[r]^-{j_*} 
& H_{n-1}(P_{n-1},P_{n-2};\F|_{P_{n-1}})\ar[d]_{(\phi_{n-1})_*}^\cong
\\ \bigoplus\limits_{x\in D_n}\!\!\!H_n(F_x,\widehat{F}_x;\F|_{F_x})\ar[u]^-{i_*}_\cong \ar[r] 
& \bigoplus\limits_{x\in D_n}\!\!\!H_{n-1}(\widehat{F}_x;\F|_{\widehat{F}_x}) \ar[u]^-{i'_*}
& \bigoplus\limits_{y\in D_{n-1}}\!\!\!\!\!\!H_{n-1}(F_y,\widehat{F}_y;\F|_{F_y})
\\ \bigoplus\limits_{x\in D_n}\!\!\!H_n(F_x,\widehat{F}_x;\F(x)) \ar[u]^{\bigoplus\limits_{x\in D_{n}}\!\!\!\!\tau_x}_\cong \ar[r]_\cong
& \bigoplus\limits_{x\in D_n}\!\!\!\widetilde{H}_{n-1}(\widehat{F}_x;\F(x)) \ar[rd]_-{d_n} \ar[u]^{\overline{\tau}_n=\bigoplus\limits_{x\in D_{n}}\!\!\!\!\tau_x}
& \bigoplus\limits_{y\in D_{n-1}}\!\!\!\widetilde{H}_{n-1}(F_y,\widehat{F}_y;\F(y)) \ar[u]^{\overline{\tau}_{n-1}=\bigoplus\limits_{y\in D_{n-1}}\!\!\!\!\!\!\tau_y}_\cong \ar[d]_{\overline{\partial}=\bigoplus\limits_{y\in D_{n-1}}\!\!\!\!\!\!\partial^y}^\cong
\\ 
&
& \bigoplus\limits_{y\in D_{n-1}}\!\!\!\widetilde{H}_{n-2}(\widehat{F}_y;\F(y))}
\end{displaymath}
where
\begin{itemize}
\item the maps $i_\ast$ and $i'_\ast$ are induced by the inclusion maps $(F_x,\widehat{F}_x)\to (P_n,P_{n-1})$, $x\in D_n$.
\item the map $\phi_{n-1}$ is defined as in the proof of \ref{prop_hom_X_A_Cx}
\item for $y\in D_{n-1}$ the map $\partial^y\colon H_{n-1}(F_y,\widehat{F}_y;\F(y))\to H_{n-1}(F_y,\widehat{F}_y;\F(y))$ is defined by
$\partial^y\left(\left[\sum\limits_{i=1}^{l}a_i s_i\right]\right)=\left[\sum\limits_{i=1}^{l}a_i (s_i-\{y\})\right]$ where $l\in\N$, and $a_i\in \F(y)$ and $s_i\in\ch_{n-1}(F_y,\widehat{F}_y)$ for $i\in\{1,\ldots,l\}$
\item the maps $\tau_x$ and $\tau'_x$ (for $x\in D_n$) and $\tau_y$ (for $y\in D_{n-1}$) are induced by the natural trasformations $T^z\colon c_{\F(z)}\Rightarrow \F|_{F_z}$, $z\in D_n\cup D_{n-1}$, as in remark \ref{rem_natural_transformation}.
\end{itemize}

Note that the maps labeled with $\cong$ are isomorphisms.

Let $\sigma=([\sigma_x])_{x\in D_n}$ with $[\sigma_x]\in \widetilde H_{n-1}(\widehat{F}_x,\F(x))$ for each $x\in D_n$. For each $x\in D_n$ we write $\sigma_x=\sum\limits_{i=1}^{l_x}a_i^x s^x_i$ with $l_x\in \N$, $a_i^x\in \F(x)$ and $s^x_i$ an $(n-1)$--chain of $\widehat{F}_x$ for every $i\in \{1,\dots,l_x\}$. We have that

\begin{align*}
d_n\left(\left(\left[\sum\limits_{i=1}^{l_x}a^x_i s^x_i\right]\right)_{x\in D_n}\right) & = (\overline{\partial}(\overline{\tau}_{n-1})^{-1} (\phi_{n-1})_\ast j_\ast i'_\ast \overline{\tau}_n)\left(\left(\left[\sum\limits_{i=1}^{l_x}a^x_i s^x_i\right]\right)_{x\in D_n}\right) = \\
&= (\overline{\partial}(\overline{\tau}_{n-1})^{-1} (\phi_{n-1})_\ast j_\ast i'_\ast ) \left(\left(\left[\sum\limits_{i=1}^{l_x}\F(x\leq \min(s^x_i))(a^x_i)\ldotp s^x_i\right]\right)_{x\in D_n}\right) = \\ 
&= (\overline{\partial}(\overline{\tau}_{n-1})^{-1} (\phi_{n-1})_\ast) \left(\left[\sum_{x\in D_n}\sum\limits_{i=1}^{l_x}\F(x\leq \min(s^x_i))(a^x_i)\ldotp s^x_i\right]\right) = \\ 
& = \left(\left[\sum\limits_{x\in D_n}\sum\limits_{s^x_i/\min(s^x_i)=y}\F(x\leq y)(a^x_i)\ldotp(s^x_i-\{y\})\right]\right)_{y\in D_{n-1}} \ .
\end{align*}
\end{proof}

Recall that a locally finite poset $P$ is graded if it is equipped with a rank function $\rk\colon P\to \N_0$ which is order-preserving and satisfies that if $x\prec y$ is a cover relation of $P$ then $\rk(y)=\rk(x)+1$. In particular, any finite boolean lattice is graded since it is isomorphic to the power set of a finite set. 

Recall that if $X$ is a finite poset, the \emph{height} of $X$, denoted $h(X)$, is one fewer than the maximum cardinality of a chain of $X$. Note that $h(X)=\dim (\K(X))$.

Now, let $L$ be a finite boolean lattice. We define the \emph{rank function} $\rk\colon L\to\N_0$ by $\rk(x)=h(\widehat U_x)$. If $x\in L$, note that $\rk(x)=1$ if and only if $x$ is an atom of $L$ and that $\rk(x)=l$ if and only if the unique expression of $x$ as a join of (distinct) atoms contains $l$ atoms. We also define the \emph{rank} of $L$ as $\rk(L)=\rk(\max(L))$. Note that there exists an isomorphism (of posets) between $L$ and the power set of the finite set $\{1,\ldots,\rk(L)\}$. Equivalently, we can consider the set $\{0,1\}$ with the partial order induced by the relation $0\leq 1$ and thus there exists an isomorphism (of posets) $\phi\colon L\to \{0,1\}^{\rk(L)}$ where $\{0,1\}^{\rk(L)}$ is endowed with the product order. Observe that such an isomorphism $\phi\colon L\to \{0,1\}^{\rk(L)}$ determines (and is determined by) a total ordering on the atoms of $L$.

From now on, we will choose a total ordering on the atoms of $L$ to obtain a fixed isomorphism $\phi\colon L\to \{0,1\}^{\rk(L)}$. For $j\in \{1,\ldots,\rk(L)\}$ let $\pr_j\colon \{0,1\}^{\rk(L)}\to \{0,1\}$ be the projection map to the $j$--th coordinate and let $\phi_j=\pr_j\phi$.

Observe that if $x\prec y$ is a cover relation of $L$, then there exists a unique $m_{xy}\in\{1,\ldots\rk(L)\}$ such that $\phi_{m_{xy}}(y)=1$, $\phi_{m_{xy}}(x)=0$ and $\pr_j(y)=\pr_j(x)$ for $j\in \{1,\ldots\rk(L)\} - \{m_{xy}\}$. We define the \emph{sign} of the cover relation $x\prec y$ as $\varepsilon(x\prec y)=\prod\limits_{j=1}^{m_{xy}}(-1)^{\phi_j(x)}$, that is, $\varepsilon(x\prec y)=-1$ if the cardinality of the set $\{j\in\{1,\ldots,m_{xy}\}\tq \phi_j(x)=1\}$ is odd and $\varepsilon(x\prec y)=1$ otherwise.

From now on, for $n\in\N$ let $\set n=\{1,\ldots,n\}$ and let $\mathcal{S}_n$ be the symmetric group of degree $n$. Also, if $n\in \N$, for each subset $A\subseteq \set n$ let $\eta_A\colon \set{\#A}\to A$ be the only bijective and order-preserving map. If $X$ and $Y$ are sets, we define $\bij(X,Y)$ as the set of bijective maps from $X$ to $Y$.

Let $n\in\N$ such that $n\geq 2$ and let $P$ be the poset of non-empty subsets of $\set{n}$ ordered by inclusion. In the following lemma, for each $A\subseteq \set{n}$ we will find an explicit generator for the relative homology group $H_{\#A-1}(U_A,\widehat U_A;\Z)\cong \widetilde H_{\#A-2}(\widehat U_A)\cong \Z$ (note that $\K(U_A)$ is the barycentric subdivision of the $(\#A -1)$--simplex and $\K(\widehat U_A)$ is its boundary). We will also compute certain cellular-type boundary maps which will be used later to show that the usual sign convention in Khovanov's cubical complex is induced by the boundary maps of the cellular chain complex of $\K(\set{n})$ (after a suitable selection of generators).

To this end, observe that for $A\subseteq \set{n}$, if $S_1\subsetneq S_2\subsetneq \ldots\subsetneq S_{\#A}$ is an $(\#A-1)$--chain of $(U_A,\widehat U_A)$ then $S_{\#A}=A$ and $\# S_j=j$ for all $j\in \set{\#A}$. It follows that there is a bijection $\psi\colon \bij(\set{\#A},A) \to \ch_{\#A-1}(U_A,\widehat U_A)$ given by $\psi(\alpha)=\{\alpha(\set{1}),\alpha(\set{2}),\ldots,\alpha(\set{\#A})\}$.

Note also that there is a bijection $\nu\colon \mathcal{S}_{\#A}\to \bij(\set{\#A},A)$ defined by $\nu(\sigma)=\eta_A\circ \sigma$.

\begin{lemma} \label{lemma_generators}
Let $n\geq 2$ and let $P=\mathcal{P}_{\neq\varnothing}(\{1,\ldots,n\})$. 

(1)  Let $A\in P$, let $k=\#A$ and let $\g_A\in C_{k-1}(U_A,\widehat U_A;\Z)$ be defined by $\g_A=\sum\limits_{\sigma\in\mathcal{S}_k}\sgn(\sigma)\psi(\eta_A \circ \sigma)$.
Then $\g_A$ is a generator of $H_{k-1}(U_A,\widehat U_A;\Z)\cong \widetilde H_{k-2}(\widehat U_A)\cong \Z$.

(2) Let $A\in P$ with $\#A\geq 2$ and let $k=\#A$. Let $a\in A$ and let $h=\eta_A^{-1}(a)$. Let $d\colon \widetilde H_{k-1}(U_A,\widehat U_A;\Z)\to \widetilde H_{k-2}(U_{A-\{a\}},\widehat U_{A-\{a\}};\Z)$ be the group homomorphism defined by
$$d\left(\left[\sum\limits_{i=1}^{l}n_i \zeta_i\right]\right)=\left[\sum\limits_{\zeta_i\ni A-\{a\}}n_i(\zeta_i-\{A\})\right]$$
where $l\in \N$, and for $i\in\{1,\ldots,l\}$, $n_i\in\Z$ and $\zeta_i\in\ch_{k-1}(U_A,\widehat U_A)$.
  
Then $d([\g_A])=(-1)^{k-h}[\g_{A-\{a\}}]$, and thus, the morphism $\tilde d\colon \Z\to\Z$ induced by $d$ under the isomorphisms of (1) that take $1\in\Z$ to $\g_A$ and $\g_{A-\{a\}}$ respectively, is the isomorphism $(-1)^{k-h}\id_\Z$.  
\end{lemma}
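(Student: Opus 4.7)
The plan is to first encode top-dimensional relative chains of $(U_A,\widehat U_A)$ combinatorially via permutations, then prove (1) by showing $\g_A$ is a cycle (via a sign-reversing involution) and primitive (via a rank argument on $\ker\partial$), and finally deduce (2) by restricting the sum defining $\g_A$ to those permutations whose chain passes through $A-\{a\}$ and carefully identifying the resulting sign.

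For (1), the map $\sigma\mapsto \psi(\eta_A\circ\sigma)$ is a bijection from $\mathcal{S}_k$ onto the set of top-dimensional relative chains (each such chain ends at $A$ and, by cardinalities, contains a unique subset of each size $j\in\{1,\ldots,k\}$), so $C_{k-1}(U_A,\widehat U_A;\Z)$ is free of rank $k!$ and $C_k=0$. Computing $\partial \g_A$ face by face, the face $j=k$ removes $A$ and therefore vanishes in the relative complex, while for $j<k$ the involution $\sigma\leftrightarrow \sigma\circ (j,j+1)$ pairs up terms whose $j$-th faces agree and whose signs are opposite, so they cancel; thus $\g_A$ is a relative cycle. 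Proposition \ref{prop_hom_Ux} with $\F=c_\Z$ (together with the fact that $|\K(\widehat U_A)|\simeq S^{k-2}$) gives $H_{k-1}(U_A,\widehat U_A;\Z)\cong \Z$; since the image of $\partial$ is torsion-free, $\ker\partial$ is a direct summand of the free group $C_{k-1}$, so a generator must be primitive in the canonical basis. As $\g_A$ has coefficients $\pm 1$ it is primitive, and therefore generates.

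For (2), the chain $\psi(\eta_A\circ\sigma)$ contains $A-\{a\}$ exactly when $\sigma(k)=h$. Letting $\kappa\colon \set{k}-\{h\}\to \set{k-1}$ be the order-preserving bijection, I set $\tau:=\kappa\circ \sigma|_{\set{k-1}}$; this defines a bijection from $\{\sigma\in\mathcal{S}_k : \sigma(k)=h\}$ onto $\mathcal{S}_{k-1}$, and a direct comparison of the chains shows $\psi(\eta_A\circ\sigma)-\{A\}=\psi_{A-\{a\}}(\eta_{A-\{a\}}\circ\tau)$. To compute the sign I factor $\sigma=\lambda\circ\tilde\tau$, where $\tilde\tau\in\mathcal{S}_k$ extends $\tau$ by $\tilde\tau(k)=k$ and $\lambda\in\mathcal{S}_k$ is the cycle of length $k-h+1$ defined by $\lambda(k)=h$ and $\lambda(j)=j+1$ for $h\leq j\leq k-1$; then $\sgn(\sigma)=(-1)^{k-h}\sgn(\tau)$, and summing over $\tau\in\mathcal{S}_{k-1}$ yields $d([\g_A])=(-1)^{k-h}[\g_{A-\{a\}}]$.

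The main obstacle is the sign bookkeeping in (2): one must identify precisely which cyclic permutation arises from "skipping" the index $h$ and verify its parity is $(-1)^{k-h}$. Everything else reduces either to a symmetry argument or to the standard fact that a rank-one direct summand of a free abelian group is generated by any primitive element lying in it.
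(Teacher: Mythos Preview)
Your proof is correct and follows essentially the same approach as the paper's: for (1) both arguments establish that $\g_A$ is a cycle via the transposition/sign-reversing pairing and then deduce that it generates from the fact that its coefficients are $\pm 1$ (the paper writes $\g_A=mg$ for a generator $g$ and concludes $m=\pm 1$, which is exactly your primitivity argument); for (2) both proofs single out the permutations with $\sigma(k)=h$ and pass to $\mathcal{S}_{k-1}$ via composition with the order-preserving bijection $\set{k}-\{h\}\to\set{k-1}$, the only cosmetic difference being that the paper uses the cycle $\gamma=(k\ k{-}1\ \cdots\ h)$ while you use its inverse $\lambda$.
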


\begin{proof}
(1) 
First of all, we wish to show that $\g_S$ is a $(k-1)$--cycle of $(U_S,\widehat U_S)$. It suffices to show that $(p_{\xi}\circ \partial)(\g_S)=0$ for every $\xi\in \ch_{k-2}(U_S,\widehat U_S)$, where $\partial\colon C_{k-1}(U_S,\widehat U_S;\Z)\to C_{k-2}(U_S,\widehat U_S;\Z)$ denotes the differential of the chain complex $C(U_S,\widehat U_S;\Z)$ and $p_\xi$ denotes the projection of $C_{k-2}(U_S,\widehat U_S;\Z)$ to the direct summand generated by the $(k-2)$--chain $\xi$.

Let $\xi=\{\xi_1,\ldots,\xi_{k-1}\}$ be a $(k-2)$--chain of $(U_S,\widehat U_S)$ and let $\xi_0=\varnothing$. 
By cardinality, there exists exactly one natural number $j\in\{1,\ldots,k-1\}$ such that $\#(\xi_j-\xi_{j-1})=2$, while $\#(\xi_l-\xi_{l-1})=1$ for every $l\neq j$. Suppose that $\xi_j-\xi_{j-1}=\{b,c\}$ with $b<c$. 

It follows that there are exactly two $(k-1)$--chains of $(U_A,\widehat U_A)$ that contain $\xi$. These are the chains $\zeta=\xi\cup\{\xi_{j-1}\cup\{b\}\}$ and $\zeta'=\xi\cup\{\xi_{j-1}\cup\{c\}\}$. Let $\sigma=\psi^{-1}(\zeta)$ and $\sigma'=\psi^{-1}(\zeta')$. Let $T\colon A\to A$ be the transposition that interchanges the elements $b$ and $c$. Then $\sigma'=T\circ \sigma$. Hence $\sgn(\sigma)=-\sgn(\sigma')$ and it follows that $(p_{\xi}\circ \partial)(\g_S)=(-1)^{j+1}(\sgn(\sigma)+\sgn(\sigma'))\xi=0$.

Thus $\g_A$ is a $(k-1)$--cycle of $(U_A,\widehat U_A)$. Since $C_k(U_A,\widehat U_A;\Z)$ is trivial, we obtain that $H_{k-1}(U_S,\widehat U_S;\Z) \cong \ker \partial$ and hence $\g_A$ represents a non-trivial element of $H_{k-1}(U_A,\widehat U_A;\Z)$. Now, let $g$ be a generator of $\ker\partial$. Then $g$ is a sum of $(k-1)$--chains of $(U_A,\widehat U_A)$ and thus it can be written as $g=\sum\limits_{\sigma\in\mathcal{S}_k}a_\sigma\psi(\eta_A\circ\sigma)$ where $a_\sigma\in\Z$ for every $\sigma\in\mathcal{S}_k$. 

Now, there exists $m\in\Z-\{0\}$ such that $\g_A=mg$ and therefore $\sgn(\sigma)=m a_\sigma$ for every $\sigma\in\Aut(S)$. It follows that $m\in\{1,-1\}$ and thus $\g_A$ is a generator of $\ker\partial$.

(2) 
Let $\sigma \in\mathcal{S}_k$. Note that  $A-\{a\} \in \psi(\eta_A\circ\sigma)$ if and only if $(\eta_A\circ \sigma)(k)=a$, which occurs if and only if $\sigma(k)=h$.

Now, let $\sigma \in\mathcal{S}_k$ such that $A-\{a\} \in \psi_A(\eta_A\circ\sigma)$. Then $\psi_A(\eta_A\circ\sigma)-\{A\} \in \ch_{k-2}(U_{A-\{a\}},\widehat U_{A-\{a\}})$ and $\psi_{A-\{a\}}(\eta_A\circ\sigma|_{\set{k-1}}^{A-{a}})=\psi_A(\eta_A\circ\sigma)-\{A\}$.

Let $\gamma\in\mathcal{S}_k$ be the cyclic permutation $(k\  {k-1}\ \cdots\  h)$ and let $\sigma \in\mathcal{S}_k$ such that $\sigma(k)=h$.
Note that $(\gamma\circ\sigma)(k)=k$. Let $\gamma|\colon \set{k}-\{h\}\to \set{k-1}$ and $\sigma|\colon \set{k-1}\to \set{k}-\{h\}$ be restrictions of $\gamma$ and $\sigma$ respectively and let $\tau_\sigma=\gamma|\circ\sigma|\colon \set{k-1}\to \set{k-1}$. It follows that $\tau_\sigma\in S_{k-1}$ and that $\sgn(\tau_\sigma)=\sgn(\gamma\circ\sigma)=(-1)^{k-h}\sgn(\sigma)$. 
Observe that there is a bijection $\{\sigma \in\mathcal{S}_k \tq \sigma(k)=h\} \to \mathcal{S}_{k-1}$ given by $\sigma \mapsto \tau_\sigma$.

Now let $\eta_A|\colon \set{k}-\{h\} \to A-\{a\}$ be the restriction of $\eta_A$. Since $\gamma|^{-1}$ is order-preserving it follows that $\eta_A|\circ \gamma|^{-1}\colon \set{k-1}\to A-\{a\}$ is also order-preserving and hence $\eta_A|\circ \gamma|^{-1}=\eta_{A-\{a\}}$. Thus $\eta_A|=\eta_{A-\{a\}}\circ \gamma|$.

Therefore,

$$d([\g_A])=d\left(\left[\sum\limits_{\sigma\in\mathcal{S}_k}\sgn(\sigma)\psi(\eta_A\circ\sigma)\right]\right)=\left[\sum\limits_{\substack{\sigma\in\mathcal{S}_k\\ \sigma(k)=h}}\sgn(\sigma)(\psi_A(\eta_A\circ\sigma)-\{A\})\right]=$$
$$ = \left[\sum\limits_{\substack{\sigma\in\mathcal{S}_k\\ \sigma(k)=h}}\sgn(\sigma) \psi_{A-\{a\}}(\eta_A|\circ\sigma|)\right] = \left[\sum\limits_{\substack{\sigma\in\mathcal{S}_k\\ \sigma(k)=h}}\sgn(\sigma) \psi_{A-\{a\}}(\eta_{A-\{a\}}\circ \gamma| \circ\sigma|)\right] = $$
$$= \left[\sum\limits_{\tau\in\mathcal{S}_{k-1}}(-1)^{k-h} \sgn(\tau) \psi_{A-\{a\}}(\eta_{A-\{a\}}\circ \tau)\right] = (-1)^{k-h}[\g_{A-\{a\}}].$$
\end{proof}

Let $n=\rk(L)$. Let $\omega\colon \{0,1\}^{\rk(L)}\to \mathcal{P}_{\neq\varnothing}(\{1,\ldots,n\})$ be the order-reversing bijection given by $\omega(b_1,\ldots,b_n)=\{j\in\{1,\ldots,n\} \tq b_{n+1-j}=0\}$ and let $\theta=\omega\phi$. Observe that for each $x\in L-\{1_L\}$ the map $\theta$ induces isomorphisms $\widetilde H_{l}(F_x,\widehat F_x;\F(x))\to \widetilde H_{l}(U_{\theta(x)},\widehat U_{\theta(x)};\F(x))$ and $\widetilde H_{l}(\widehat F_x;\F(x))\to \widetilde H_{l}(\widehat U_{\theta(x)};\F(x))$ for all $l\in\Z$, which will all be denoted by $\theta_\ast$.
Let $\tau\colon L-\{1_L\}\to \Z$ be defined by $\tau(x)=\#\theta(x)+\sum_{j\in\theta(x)}j +1$ and let $\varsigma\colon L-\{1_L\}\to \{1,-1\}$ be defined by $\varsigma(x)=(-1)^{\tau(x)}$.

\begin{lemma} \label{lemma_sign_comparison}
Let $L$ be a finite boolean lattice and let $n=\rk(L)$. Let $\tau\colon L\to \Z$ be defined by $\tau(z)=\sum_{j\notin\theta(z)}(j+n)$ and let $\varsigma\colon L\to \{1,-1\}$ be defined by $\varsigma(z)=(-1)^{\tau(x)}$.

Let $x,y\in L$ such that $x\prec y$. Let $a$ be the only element of $\theta(x)-\theta(y)$ and let $h=\eta_{\theta(x)}^{-1}(a)$.

Then $\varsigma(y)\varepsilon(x\prec y)= \varsigma(x) (-1)^{\#\theta(x)-h}$.
\end{lemma}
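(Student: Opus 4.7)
The plan is to reduce the claim to a parity computation after dictionary-translating the boolean coordinates $\phi$ into the subset description given by $\theta=\omega\phi$. The starting observation is that, for every $z\in L$ and every $k\in\set n$, the definition of $\omega$ gives $\phi_k(z)=1$ if and only if $n+1-k\notin\theta(z)$. Since $\phi_{m_{xy}}(x)=0$ and $\phi_{m_{xy}}(y)=1$ while $\theta(x)-\theta(y)=\{a\}$, this identifies $m_{xy}=n+1-a$. Substituting $i=n+1-j$ into the product defining $\varepsilon(x\prec y)$ and using that $a\in\theta(x)$, I would obtain
\begin{displaymath}
\varepsilon(x\prec y)=(-1)^{N},\qquad N=\#\{i\in\set n \tq a<i,\ i\notin\theta(x)\}.
\end{displaymath}

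Next I would compute $\varsigma(y)/\varsigma(x)$. Because $x\prec y$ implies $\theta(y)=\theta(x)-\{a\}$, the complement $\set n-\theta(y)$ is obtained from $\set n-\theta(x)$ by adjoining the single element $a$. Therefore $\tau(y)-\tau(x)=a+n$, and hence $\varsigma(y)=\varsigma(x)(-1)^{a+n}$.

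Finally, I would identify $h$ combinatorially: since $\eta_{\theta(x)}\colon \set{\#\theta(x)}\to \theta(x)$ is order-preserving and $a\in\theta(x)$, one has $h=\#\{i\in\theta(x)\tq i\leq a\}$, whence $\#\{i\in\theta(x)\tq a<i\}=\#\theta(x)-h$ and thus $N=(n-a)-(\#\theta(x)-h)$. Plugging the three ingredients into the target identity $\varsigma(y)\varepsilon(x\prec y)=\varsigma(x)(-1)^{\#\theta(x)-h}$ reduces it modulo $2$ to
\begin{displaymath}
a+n+N\equiv \#\theta(x)-h \pmod{2},
\end{displaymath}
which, after substituting the expression for $N$, becomes $2n-\#\theta(x)+h\equiv \#\theta(x)-h\pmod{2}$; this holds because the two sides differ by $2n-2\#\theta(x)+2h$, which is even. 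The whole argument is just bookkeeping once the dictionary between $\phi$-coordinates and $\theta$-subsets is written down; the only subtle point, and the main (minor) obstacle, is keeping the reversal $k\leftrightarrow n+1-k$ straight when relating the cube-position $m_{xy}$ to the element $a\in\set n$.
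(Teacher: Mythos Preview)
Your proof is correct and follows essentially the same approach as the paper's: both identify $a=n+1-m_{xy}$ via the dictionary between $\phi$-coordinates and $\theta$-subsets, compute the exponent of $\varepsilon(x\prec y)$ as $n-a+h-\#\theta(x)$ (your $N$), note that $\tau(y)-\tau(x)=a+n$, and reduce the claim to an even difference. Your presentation is slightly more streamlined, but the argument is the same.
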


\begin{proof}
Let $q=\#\{j\in\{1,\ldots,m_{xy}\}\tq \phi_j(x)=1\}$. Recall that $\varepsilon(x\prec y)=(-1)^q$. Thus, it suffices to prove that $\tau(y)+q \equiv \tau(x)+\#\theta(x)-h \ \textnormal{(mod 2)}$.

Note that $\theta(x)-\theta(y)=\{n+1-m_{xy}\}$. Hence $a=n+1-m_{xy}$. Note also that $\#\{j\in\{1,\ldots,a\}\tq j\in \theta(x)\} = \eta_{\theta(x)}^{-1}(a)$ since $\eta_{\theta(x)}\colon \set{\#\theta(x)}\to \theta(x)$ is order-preserving.

Observe that
\begin{align*}
q & = \#\{j\in \set{m_{xy}} \tq \phi_j(x)=1\}=\#\{j\in\{n+1-m_{xy},\ldots,n\}\tq \phi_{n+1-j}(x)=1\} = \\
& = \#\{j\in\{n+1-m_{xy},\ldots,n\}\tq j\notin \theta(x)\}= \#\theta(x)^c-\#\{j\in \set{n+1-m_{xy}}\tq j\notin \theta(x)\} = \\
& = \#\theta(x)^c-(n+1-m_{xy}-\#\{j\in \set{n+1-m_{xy}}\tq j\in \theta(x)\}) = \\
& = \#\theta(x)^c-(n+1-m_{xy}-\eta_{\theta(x)}^{-1}(a))= m_{xy}+h-\#\theta(x)-1 = n-a+h-\#\theta(x).
\end{align*}

Thus, 
\begin{align*}
\tau(x)+\#\theta(x)-h-\tau(y)-q & = \sum\limits_{j\notin\theta(x)}(j+n)+ \#\theta(x)-h - q- \sum\limits_{j\notin\theta(y)}(j+n) = \\ &= -(a+n) + \#\theta(x)-h - q = \\
& = -a - n + \#\theta(x)-h - (n-a+h-\#\theta(x)) = 2\#\theta(x) - 2n -2h.
\end{align*}
Hence, $\tau(y)+q \equiv \tau(x)+\#\theta(x)-h \ \textnormal{(mod 2)}$.
\end{proof}

\begin{proposition} \label{prop_homology_boolean_lattice}
Let $L$ be a finite boolean lattice. Let $1_L=\max(L)$ and let $\F\colon  L-\{1_L\} \to \Ab$ be a covariant functor. Let $r=\rk(L)$. For each $n\in\Z$, let $D_n=\{x\in L-\{1_L\} \tq \rk(x)=r-n-1\}$. Let $\mathcal{C}=(C_n,d_n)_{n\in\Z}$ be the chain complex defined by $C_n=\bigoplus\limits_{x\in D_n}\F(x)$ with differentials $d_n\colon C_n\to C_{n-1}$ defined by $d_n(\lambda)=\sum \varepsilon(x\prec y) \F(x\leq y)(\lambda)$ for $\lambda \in \F(x)$ and sum over the elements $y$ which cover $x$.

Then $H_n(L-\{1_L\};\F)\cong H_n(\mathcal{C})$ for all $n\in \Z$.
\end{proposition}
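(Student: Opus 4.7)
The plan is to apply Theorem~\ref{theo_homology_quasicellular_opposite} to $P = L - \{1_L\}$, which I first need to verify has a quasicellular opposite. Define $\rho\colon P^\op\to\N_0$ by $\rho(x) = r - \rk(x) - 1$. Since $\rk$ is the rank function of the graded poset $L$, the map $\rho$ is order-preserving on $P^\op$ and strictly so along cover relations, so its level sets $D_n = \{x\in P : \rk(x) = r-n-1\}$ are antichains. For $x\in P$, the set $\widehat{U}_x^{P^\op} = \widehat{F}_x^P$ equals $F_x^L - \{x,1_L\}$, which is the proper part of the boolean sublattice $F_x^L$ of rank $r-\rk(x) = \rho(x)+1$. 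The geometric realization of the order complex of the proper part of a rank-$k$ boolean lattice is the barycentric subdivision of $\partial\Delta^{k-1}$, homeomorphic to $S^{k-2}$, so its reduced homology is free abelian and concentrated in degree $\rho(x)-1$, fulfilling the quasicellular hypotheses.

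By the theorem, $H_n(P;\F) \cong H_n(\mathcal{C}^{P,\F})$, with $C^{P,\F}_n = \bigoplus_{x\in D_n}\widetilde{H}_{n-1}(\widehat{F}_x^P; \F(x))$. Since $\widetilde{H}_{n-1}(\widehat{F}_x^P; \Z) \cong \Z$ is free, the universal coefficient theorem yields $\widetilde{H}_{n-1}(\widehat{F}_x^P; \F(x)) \cong \F(x)$ once a generator $\gamma_x$ of $\widetilde{H}_{n-1}(\widehat{F}_x^P; \Z)$ is fixed, giving the identification of $C^{P,\F}_n$ with the $C_n$ of the proposition. To make the differentials match the prescribed Khovanov sign $\varepsilon(x\prec y)$, I would take $\gamma_x = \varsigma(x)\,\xi_x$, where $\xi_x$ is obtained by pushing the explicit cycle $\g_{\theta(x)}$ of Lemma~\ref{lemma_generators}(1) through the bijection induced by $\theta^{-1}$ and then applying the connecting homomorphism $\partial\colon H_n(F_x,\widehat{F}_x;\Z)\to\widetilde{H}_{n-1}(\widehat{F}_x;\Z)$, and $\varsigma(x)$ is the normalizing sign from Lemma~\ref{lemma_sign_comparison}.

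To compute the differential, observe that any $(n-1)$-chain of $\widehat{F}_x^P$ is saturated: the strictly increasing ranks must take each value between $\rk(x)+1$ and $r-1$, so each element covers the previous. Hence for $y\in D_{n-1}$ the $y$-component of $d^{P,\F}_n(\lambda\cdot\gamma_x)$ vanishes unless $y$ covers $x$ in $L$. For such $y$, applying the theorem's boundary formula to $\lambda\cdot\gamma_x$ amounts to restricting the cycle to chains whose minimum is $y$ and deleting $y$, with each coefficient transformed by $\F(x\leq y)$. Lemma~\ref{lemma_generators}(2) identifies this operation at the level of $\g_{\theta(x)}$ as multiplication by $(-1)^{\#\theta(x)-h}$ (where $a$ is the unique element of $\theta(x)-\theta(y)$ and $h=\eta_{\theta(x)}^{-1}(a)$) followed by the analogous map into $\g_{\theta(y)}$; after conjugating by $\theta^{-1}$ and by the sign normalizations $\varsigma(x),\varsigma(y)$, Lemma~\ref{lemma_sign_comparison} gives the total sign $\varepsilon(x\prec y)$.

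The principal difficulty is precisely this sign bookkeeping: since $\theta$ is order-reversing, transporting cycles and boundary maps between $\widehat{F}_x^P$ and $\widehat{U}_{\theta(x)}^{\mathcal{P}_{\neq\varnothing}}$ introduces shifts that must be absorbed by the normalization $\varsigma$. The definitions of $\tau$ and $\varsigma$ preceding Lemma~\ref{lemma_sign_comparison} are rigged so that the combinatorial sign $(-1)^{\#\theta(x)-h}$ coming out of Lemma~\ref{lemma_generators}(2) becomes, after this reversal is accounted for, the boolean-lattice coordinate sign $\varepsilon(x\prec y)$, giving the desired agreement of differentials.
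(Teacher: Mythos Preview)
Your proposal is correct and follows essentially the same approach as the paper: you verify the quasicellular hypotheses for $(L-\{1_L\})^{\op}$, invoke Theorem~\ref{theo_homology_quasicellular_opposite}, identify the chain groups via the universal coefficient theorem using the generators $\g_{\theta(x)}$ from Lemma~\ref{lemma_generators}(1) twisted by the sign $\varsigma(x)$, and then combine Lemma~\ref{lemma_generators}(2) with Lemma~\ref{lemma_sign_comparison} to check that the differentials match. The paper organizes the last step as a single large commutative diagram, whereas you give a narrative account (including the useful observation that maximal chains in $\widehat{F}_x^P$ are saturated, forcing $y=\min s_i^x$), but the substance is identical.
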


\begin{proof}
Observe that $(L-\{1_L\})^\op$ is a quasicellular poset with quasicellular morphism $\rho\colon (L-\{1_L\})^\op \to \N_0$ defined by $\rho(x)=r-1-\rk(x)$. For each $n\in\Z$, let $D_n=\{x\in L-\{1_L\} \tq \rk(x)=r-n-1\}=\rho^{-1}(\{n\}\cap\N_0)$.

Applying theorem \ref{theo_homology_quasicellular_opposite} we obtain that $H_n(L-\{1_L\};\F)\cong H_n(\mathcal{C}^{L-\{1_L\},\F})$ for all $n\in \Z$, where 
$\mathcal{C}^{L-\{1_L\},\F}_m = \bigoplus\limits_{x\in D_m}\widetilde H_{m-1}(\widehat F_x,\F(x))$ for all $m\in \Z$ and with differentials defined as in theorem \ref{theo_homology_quasicellular_opposite}. 

For each $x\in L$ let $\upsilon_x\colon \F(x)\to \F(x)\otimes\Z$ be the isomorphism defined by $\upsilon_x(\lambda)=\lambda\otimes \varsigma(x)$.
Let $g_x\colon \Z\to H_m(U_{\theta(x)},\widehat U_{\theta(x)})$ be the isomorphism defined by $g_x(1)=\g_{\theta(x)}$.

Now note that, by the universal coefficient theorem and lemma \ref{lemma_generators}, for each $m\in\Z$ and for each $x\in D_m$ there is an isomorphism $\upsilon'_x\colon \F(x)\to \widetilde H_{m-1}(\widehat F_x,\F(x))$ given by $\upsilon'_x(\lambda)=\varsigma(x)\lambda\ldotp\theta_\ast ^{-1}(\partial_x([\g_{\theta(x)}]))$ where $\partial_x\colon H_{m-1}(U_{\theta(x)},\widehat U_{\theta(x)};\Z)\to \widetilde H_{m-2}(\widehat U_{\theta(x)})$ is the isomorphism obtained from the long exact sequence of the pair $(U_{\theta(x)},\widehat U_{\theta(x)})$.

It remains to prove that under this isomorphisms the differentials are given by the formula stated in the proposition. Concretely, we have to prove that for any cover relation $x\prec y$ in $L-\{1_L\}$, the following diagram commutes
\begin{displaymath}
\xymatrix{\F(x) \ar[r]^-{\upsilon_x} \ar[d]_{\varepsilon(x\prec y).\F(x\leq y)} & \widetilde H_{m-1}(\widehat F_x,\F(x)) \ar[d]^{\delta_{xy}} \\
\F(y) \ar[r]^-{\upsilon_y} & \widetilde H_{m-2}(\widehat F_y,\F(y))}
\end{displaymath}
where $m\in \Z$ is such that $x\in D_m$ and where $\delta_{xy}$ is the morphism induced by the differential $d_n^{L-\{1_L\};\F|}$ of theorem \ref{theo_homology_quasicellular_opposite} in the corresponding direct summands of the domain and codomain. Observe that the morphism $\delta_{xy}$ is defined by $\delta_{xy}\left(\left[\sum\limits_{i=1}^{l_x}a^x_i s^x_i\right]\right)=\left[\sum\limits_{s^x_i\ni y}\F(x\leq y)(a^x_i)\ldotp(s^x_i-\{y\})\right]$ where $l_x\in \N$, and for every $i=\{1,\dots,l_x\}$, $a^x_i\in \F(x)$ and $s^x_i\in \ch_{m-1}(\widehat{F}_x)$.

Since $x\prec y$, we have that $\theta(x)\supseteq \theta(y)$ and $\#(\theta(x)-\theta(y))=1$. Let $d\colon  H_m(U_{\theta(x)},\widehat U_{\theta(x)}) \to  H_{m-1}(U_{\theta(y)},\widehat U_{\theta(y)})$ be the map defined in lemma \ref{lemma_generators}.

Let $\widetilde\delta_{xy}\colon H_m(F_x,\widehat F_x,\F(x)) \to H_{m-1}(F_y,\widehat F_y,\F(y))$ be defined by 
\begin{displaymath}
\widetilde\delta_{xy}\left(\left[\sum\limits_{i=1}^{l_x}a^x_i s^x_i\right]\right)=\left[\sum\limits_{s^x_i\ni y}\F(x\leq y)(a^x_i)\ldotp(s^x_i-\{x\})\right]
\end{displaymath}
where $l_x\in \N$, and for every $i=\{1,\dots,l_x\}$, $a^x_i\in \F(x)$ and $s^x_i\in \ch_{m-1}(F_x,\widehat{F}_x)$.

Let $\beta=(-1)^{\#\theta(x)-h}\ldotp\id_\Z$.

Consider the following diagram, where the maps $\partial_x$, $\partial_y$, $\partial'_x$ and $\partial'_y$, are connection morphisms of the long exact sequences associated to the corresponding pairs, and where the maps labelled with $\cong$ are isomorphisms.
\begin{displaymath}
\xymatrix{ & H_m(U_{\theta(x)},\widehat U_{\theta(x)})\otimes\F(x) \ar@{<->}[r]^{UCT}_-{\cong} \ar@/^3pc/[ddd]^{d\otimes \F(x\leq y)} & H_m(U_{\theta(x)},\widehat U_{\theta(x)},\F(x)) \ar[r]^{\partial'_x}_-{\cong}  & \widetilde H_{m-1}(\widehat U_{\theta(x)},\F(x))
\\
\F(x) \ar[r]^-{\upsilon_x}_-{\cong} \ar[d]_{\varepsilon(x\prec y).\F(x\leq y)} & \Z\otimes\F(x) \ar[d]_{\beta \otimes \F(x\leq y)} \ar[u]^{g_x\otimes \id_{\F(x)}}_-{\cong} & H_m(F_x,\widehat F_x,\F(x)) \ar[r]^{\partial_x}_-{\cong} \ar[u]_{\theta_\ast}^-{\cong} \ar[d]^{\widetilde\delta_{xy}} & \widetilde H_{m-1}(\widehat F_x,\F(x)) \ar[d]^{\delta_{xy}} \ar[u]_{\theta_\ast}^-{\cong}
\\
\F(y) \ar[r]^-{\upsilon_y}_-{\cong} & \Z\otimes\F(y) \ar[d]_{g_y\otimes \id_{\F(y)}}^-{\cong} & H_{m-1}(F_y,\widehat F_y,\F(y)) \ar[r]^{\partial_y}_-{\cong}  \ar[d]^{\theta_\ast}_-{\cong} &  \widetilde H_{m-2}(\widehat F_y,\F(y)) \ar[d]_{\theta_\ast}^-{\cong}
\\
& H_m(U_{\theta(y)},\widehat U_{\theta(y)})\otimes\F(y) \ar@{<->}[r]_{UCT}^-{\cong} & H_m(U_{\theta(y)},\widehat U_{\theta(y)},\F(y)) \ar[r]^{\partial'_y}_-{\cong} & \widetilde H_{m-1}(\widehat U_{\theta(y)},\F(y)) 
}
\end{displaymath}

The left square is commutative by lemma \ref{lemma_sign_comparison}. The other part of the diagram is easily seen to be commutative. The result follows.
\end{proof}

\begin{theo} \label{theo_homology_boolean_lattice}
Let $L$ be a non-empty finite boolean lattice, let $1=\max(L)$ and let $r=\rk(L)$. For each $n\in\Z$, let $D_n=\{x\in L \tq \rk(x)=r-n\}$. Let $\F\colon L\to\Ab$ be a functor. Let $\mathcal{E}=(E_\ast,d^\mathcal{E}_\ast)$ be the chain complex defined by $E_n=\bigoplus_{x\in D_n}\F(x)$ for $n\in\N_0$, $E_n=0$ for $n<0$, with differentials $d^\mathcal{E}_n\colon E_n\to E_{n-1}$ defined by $d^\mathcal{E}_n(\lambda)=\sum \varepsilon(x\prec y) \F(x\leq y)(\lambda)$ for $\lambda \in \F(x)$ and sum over the elements $y$ which cover $x$.

Then $H_n(L,L-\{1\};\F)\cong H_n(\mathcal{E})$ for all $n\in \Z$.
\end{theo}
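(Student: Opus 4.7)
The plan is to combine Proposition \ref{prop_hom_Ux} (applied to the maximum of $L$) with Proposition \ref{prop_homology_boolean_lattice}, and to view $\mathcal{E}$ as an augmentation of the chain complex $\mathcal{C}$ from the latter proposition by $\F(1)$ in degree zero. Since $1=\max(L)$, we have $U_1=L$ and $\widehat U_1=L-\{1\}$, so Proposition \ref{prop_hom_Ux} yields
\[
H_n(L,L-\{1\};\F)\cong\begin{cases}\coker\alpha & \text{if }n=0,\\ \ker\alpha & \text{if }n=1,\\ H_{n-1}(L-\{1\};\F|_{L-\{1\}}) & \text{if }n\geq 2,\end{cases}
\]
where $\alpha\colon\colim\F|_{L-\{1\}}\to\F(1)$ is the canonical morphism. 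By Proposition \ref{prop_homology_boolean_lattice}, $H_n(L-\{1\};\F|_{L-\{1\}})\cong H_n(\mathcal{C})$, where $\mathcal{C}_n=\bigoplus_{\rk(x)=r-n-1}\F(x)$ and the differentials have the same form as in $\mathcal{E}$.

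I would then observe that, since $E_0=\F(1)$, $E_n=C_{n-1}$ for $n\geq 1$ and $d^\mathcal{E}_{n+1}=d^\mathcal{C}_n$ for $n\geq 1$, there is a short exact sequence of chain complexes
\[
0\to\F(1)[0]\to\mathcal{E}\to\mathcal{C}[1]\to 0,
\]
where $\F(1)[0]$ denotes the complex concentrated in degree $0$ and $\mathcal{C}[1]_n=\mathcal{C}_{n-1}$. The associated long exact sequence in homology immediately gives $H_n(\mathcal{E})\cong H_{n-1}(\mathcal{C})$ for $n\geq 2$ together with
\[
0\to H_1(\mathcal{E})\to H_0(\mathcal{C})\xrightarrow{\partial}\F(1)\to H_0(\mathcal{E})\to 0,
\]
in which the connecting morphism $\partial$ is represented at the chain level by $d^\mathcal{E}_1\colon C_0\to\F(1)$, namely $c_x\in\F(x)\mapsto\varepsilon(x\prec 1)\F(x\leq 1)(c_x)$.

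It remains to identify $\partial$ with $\alpha$ under the isomorphism $\psi\colon H_0(\mathcal{C})\cong\colim\F|_{L-\{1\}}$ from Proposition \ref{prop_homology_boolean_lattice}. On the coatom summand $\F(x)$, the isomorphism $\psi$ amounts to multiplication by the sign $\varsigma(x)$, so $\alpha\circ\psi$ sends $c_x$ to $\varsigma(x)\F(x\leq 1)(c_x)$. Hence $\partial$ and $\alpha\circ\psi$ differ on $\F(x)$ by the factor $\varsigma(x)^{-1}\varepsilon(x\prec 1)$. By Lemma \ref{lemma_sign_comparison} applied with $y=1$, for every coatom $x$ one has $\#\theta(x)=1$ and $h=1$, so $(-1)^{\#\theta(x)-h}=1$ and this factor simplifies to the global sign $\varsigma(1)$, independent of $x$. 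Since multiplying a map by $\pm 1$ does not alter its kernel or cokernel, $\ker\partial\cong\ker\alpha$ and $\coker\partial\cong\coker\alpha$, and the isomorphisms combine to yield $H_n(L,L-\{1\};\F)\cong H_n(\mathcal{E})$ for every $n$.

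The main obstacle is this final sign comparison: one must check that the signs $\varepsilon(x\prec 1)$ carried by $d^\mathcal{E}_1$ and the signs $\varsigma(x)$ hidden in the Proposition \ref{prop_homology_boolean_lattice} isomorphism differ only by a global factor. Lemma \ref{lemma_sign_comparison} is exactly what does this, reducing the discrepancy to a single sign $\varsigma(1)$ that is harmlessly absorbed in the kernel and cokernel computations.
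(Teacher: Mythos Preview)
Your proof is correct and follows essentially the same route as the paper's: both combine Proposition~\ref{prop_hom_Ux} with Proposition~\ref{prop_homology_boolean_lattice}, identify $\mathcal{E}$ as the shift of $\mathcal{C}$ augmented by $\F(1)$, and then match $d^{\mathcal{E}}_1$ with $\alpha$ in low degrees. Your packaging via the short exact sequence $0\to\F(1)[0]\to\mathcal{E}\to\mathcal{C}[1]\to 0$ is a clean way to organize this, and is equivalent to the paper's direct argument that $H_n(\mathcal{E})\cong H_{n-1}(\mathcal{C})$ for $n\geq 2$ together with the diagram for $n=0,1$.

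Where the two arguments genuinely differ is in the identification of $d^{\mathcal{E}}_1$ with $\alpha$. The paper simply asserts a commutative square relating $d^{\mathcal{E}}_1\colon C_0\to\F(1)$ to $\alpha$ through the inclusion $C_0\hookrightarrow C_0(L-\{1\};\F|)$ and the colimit description of $H_0$; you instead trace the Proposition~\ref{prop_homology_boolean_lattice} isomorphism explicitly, pick up the signs $\varsigma(x)$ from the maps $\upsilon'_x$, and invoke Lemma~\ref{lemma_sign_comparison} with $y=1$ to show that $\varsigma(x)^{-1}\varepsilon(x\prec 1)=\varsigma(1)$ is independent of the coatom $x$. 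Your version makes transparent why the signs line up (and why only a harmless global $\pm 1$ survives), whereas the paper's diagram hides this bookkeeping. One small point worth making explicit in your write-up: the claim that $\psi$ restricts to multiplication by $\varsigma(x)$ on each coatom summand uses not only the isomorphism $\upsilon'_x$ from the proof of Proposition~\ref{prop_homology_boolean_lattice} but also that, in degree zero, the cellular-type isomorphism of Theorem~\ref{theo_homology_quasicellular_opposite} is induced by the inclusion $P_0\hookrightarrow P$; this is straightforward but should be stated.
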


\begin{proof}
Let $\mathcal{C}=(C_\ast,d_\ast)$ be the chain complex of proposition \ref{prop_homology_boolean_lattice}. Note that $E_n=C_{n-1}$ and $d^\mathcal{E}_n=d_{n-1}$ for $n\in \N$. Thus, for $n\geq 2$ we obtain that 
$$H_n(\mathcal{E})\cong H_{n-1}(\mathcal{C}) \cong H_{n-1}(L-\{1\};\F|) \cong H_{n}(L,L-\{1\};\F)$$
where the second isomorphism follows from \ref{prop_homology_boolean_lattice} and the third isomorphism follows from \ref{prop_hom_Ux}.

It is not difficult to prove 
that the inclusion $\inc\colon C_0\to C_0(L-\{1\},\F|)$ induces an isomorphism $\inc_\ast\colon H_0(\mathcal{C})\to H_0(L-\{1\},\F|)$. Let $\alpha\colon \colim \F|_{L-\{1\}} \to \F(1)$ be the morphism induced by the maps $\F(y\to 1)$ for $y<1$ as in proposition \ref{prop_hom_Ux}. It is easy to verify that there is a commutative diagram
\begin{displaymath}
\xymatrix{C_0 \ar[rr]^{d^\mathcal{E}_1} \ar[d]_{q} & & \F(1) \\ H_0(\mathcal{C}) \ar[r]^-{\inc_\ast}_-\cong & H_0(L-\{1\};\F|_{L-\{1\}}) \ar[r]_-\cong & \colim \F|_{L-\{1\}} \ar[u]_\alpha}
\end{displaymath}
where $q$ is the quotient map.

Therefore,

$$H_0(\mathcal{E})=\coker d^\mathcal{E}_1 = \coker \alpha \cong H_{0}(L,L-\{1\};\F)$$

and
 
$$H_1(\mathcal{E})=\ker d^\mathcal{E}_1 / \Ima d^\mathcal{E}_2 = q(\ker d^\mathcal{E}_1) \cong \ker \alpha \cong H_{1}(L,L-\{1\};\F)$$

\end{proof}

The following corollary gives an alternative proof to a similar result of Everitt and Turner \cite[Theorem 24]{everitt2009homology}. In addition, our result gives a precise description of Khovanov's homology of knots as a particular instance of homology of posets with functor coefficients. Thus, we obtain a more conceptual proof of this fact.

\begin{coro} \label{coro_everitt_turner}
Let $L$ be a non-empty finite boolean lattice, let $1=\max(L)$ and let $r=\rk(L)$. Let $\F\colon L\to\Ab$ be a functor. Let $\mathcal(K)_\ast(L,\F)$ be the corresponding Khovanov's cube complex (as in \cite{everitt2009homology}). Then 
$$H_n(\mathcal(K)_\ast(L,\F))\cong H_n(L,L-\{1\};\F)$$
for all $n\in\Z$.
\end{coro}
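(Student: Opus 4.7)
The plan is to observe that Theorem \ref{theo_homology_boolean_lattice} has already done the essential work: it identifies the relative homology $H_n(L, L-\{1\}; \F)$ with the homology of a completely explicit chain complex $\mathcal{E}$ whose $n$-th term is $\bigoplus_{x\in D_n}\F(x)$ and whose differential sums the contributions $\varepsilon(x\prec y)\F(x\leq y)$ over covering relations. So the corollary reduces to the identification of $\mathcal{E}$ with Khovanov's cube complex $\mathcal{K}_\ast(L,\F)$ as defined in \cite{everitt2009homology}.

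First I would unpack the definition of $\mathcal{K}_\ast(L,\F)$ from Everitt--Turner: after the choice of a total ordering on the atoms of $L$ (which is precisely the datum we used to build the isomorphism $\phi\colon L\to\{0,1\}^{\rk(L)}$), the $n$-th chain module is the direct sum of $\F(x)$ over all $x$ of a fixed rank/height, and the differential sends $\lambda\in\F(x)$ to a signed sum $\sum \pm\, \F(x\leq y)(\lambda)$ over the covers $x\prec y$, where the sign is exactly $\varepsilon(x\prec y)=\prod_{j=1}^{m_{xy}}(-1)^{\phi_j(x)}$, i.e. $(-1)$ to the number of $1$'s appearing before the flipped coordinate in the binary label of $x$. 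This is the standard Khovanov cube sign convention.

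Next I would line up the indexing. Theorem \ref{theo_homology_boolean_lattice} grades by $D_n=\{x\in L\tq \rk(x)=r-n\}$, i.e. homologically with $1=\max(L)$ sitting in degree $0$; whereas Khovanov's cube is conventionally indexed cohomologically (or with $1_L$ in top degree, depending on the version used in \cite{everitt2009homology}). The two chain complexes then differ at worst by reversing the direction of indexing, which corresponds to the well-known duality between the two natural ways of stratifying the cube by rank. Since the modules, the cover relations, and the signs all match on the nose, this amounts to a purely formal reindexing, and yields a canonical isomorphism of chain complexes (or cochain complexes, according to convention) $\mathcal{E}\cong \mathcal{K}_\ast(L,\F)$.

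The one point that must be verified carefully, and which I expect to be the main (though still routine) obstacle, is that the sign convention $\varepsilon(x\prec y)$ used in Theorem \ref{theo_homology_boolean_lattice} agrees with the sign convention of \cite{everitt2009homology}; this was precisely the purpose of Lemmas \ref{lemma_generators} and \ref{lemma_sign_comparison}, where the correcting function $\varsigma\colon L\to\{\pm 1\}$ was introduced to convert the canonical cellular signs (coming from the generators $\g_A$ of the relative homology of the order complex of a simplex) into the multiplicative cube signs. Once this matching is in place, combining with Theorem \ref{theo_homology_boolean_lattice} gives
\[
H_n(\mathcal{K}_\ast(L,\F))\cong H_n(\mathcal{E})\cong H_n(L,L-\{1\};\F)
\]
for every $n\in\Z$, which is the desired conclusion.
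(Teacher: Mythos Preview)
Your proposal is correct and follows exactly the paper's approach: the paper's proof is the single line ``Follows from theorem \ref{theo_homology_boolean_lattice}'', and your argument is just an unpacking of that, identifying the complex $\mathcal{E}$ of the theorem with Khovanov's cube complex via the agreement of modules, cover-relation differentials, and the sign convention $\varepsilon(x\prec y)$. Your remarks about sign matching via Lemmas \ref{lemma_generators} and \ref{lemma_sign_comparison} are accurate but already absorbed into the proof of Theorem \ref{theo_homology_boolean_lattice}, so at this point only the (definitional) identification $\mathcal{E}=\mathcal{K}_\ast(L,\F)$ remains.
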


\begin{proof}
Follows from theorem \ref{theo_homology_boolean_lattice}.
\end{proof}

\begin{coro}
Let $\mathcal{L}$ be a link diagram with $r$ crossings and let $KH_\ast(\mathcal{L})$ denote its Khovanov's homology. Let $B_{\mathcal{L}}$ be the associated boolean lattice of rank $r$ and let $\F_{KH}\colon B_{\mathcal{L}}\to\Ab$ be the functor corresponding to Khovanov's cube construction. Let $1_{B_{\mathcal{L}}}=\max B_{\mathcal{L}}$. Then $KH_n(\mathcal{L})\cong H_{r-n}(B_{\mathcal{L}},B_{\mathcal{L}}-\{1_{B_{\mathcal{L}}}\};\F_{KH})$
\end{coro}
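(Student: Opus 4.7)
The plan is to reduce this statement directly to Corollary \ref{coro_everitt_turner} and carefully track the grading conventions used to define Khovanov homology. First, I would apply Corollary \ref{coro_everitt_turner} to the boolean lattice $L = B_{\mathcal{L}}$ and the functor $\F = \F_{KH}$, obtaining an isomorphism $H_n(\mathcal{K}_\ast(B_{\mathcal{L}}, \F_{KH})) \cong H_n(B_{\mathcal{L}}, B_{\mathcal{L}} - \{1_{B_{\mathcal{L}}}\}; \F_{KH})$ for every $n \in \Z$.

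Next, I would compare the two indexing conventions for Khovanov's cube complex. By construction, Khovanov homology $KH_n(\mathcal{L})$ of a link diagram with $r$ crossings is the (co)homology of the complex whose chain group at degree $k$ is $\bigoplus_{\rk(x) = k} \F_{KH}(x)$, with the differential raising degree by one. On the other hand, the complex $\mathcal{E}$ appearing in Theorem \ref{theo_homology_boolean_lattice} (which was identified with $\mathcal{K}_\ast$ in the course of proving Corollary \ref{coro_everitt_turner}) places the summands associated to elements of rank $k$ in homological degree $r - k$, since $D_n = \{x \in L : \rk(x) = r - n\}$. Thus the identification of chain complexes reverses the grading, giving $KH_n(\mathcal{L}) \cong H_{r-n}(\mathcal{E})$; combining this with the isomorphism above yields the stated conclusion $KH_n(\mathcal{L}) \cong H_{r-n}(B_{\mathcal{L}}, B_{\mathcal{L}} - \{1_{B_{\mathcal{L}}}\}; \F_{KH})$.

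There is essentially no genuine obstacle at this stage. The only nontrivial point to verify is that the signs of the differentials in the standard Khovanov cube complex agree with the signs $\varepsilon(x \prec y)$ prescribed in Theorem \ref{theo_homology_boolean_lattice}, but this is precisely the content of Corollary \ref{coro_everitt_turner} (built on Lemma \ref{lemma_sign_comparison}), so no further checking is needed.
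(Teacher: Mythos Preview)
Your proposal is correct and follows essentially the same approach as the paper, which simply states that the result follows from Corollary~\ref{coro_everitt_turner}. You have spelled out the grading bookkeeping (the passage from rank $k$ to homological degree $r-k$ via $D_n=\{x:\rk(x)=r-n\}$) that the paper leaves implicit, but the argument is the same.
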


\begin{proof}
Follows from corollary \ref{coro_everitt_turner}.
\end{proof}

\begin{notation}
Let $X$ be a poset and let $\sigma=\{\sigma_0,\dots,\sigma_n\}\in \ch_{n}(X)$. Let $x\in\sigma$. We will write $\sigma_x$ for the chain $\sigma-\{x\}$.
  
Furthermore, suppose that $x$ is an up beat point of $X$ and $y=\min \widehat{F}_x$ or that $x$ is a down beat point of $X$ and $y=\max \widehat{U}_x$. Then $\sigma\cup\{y\}$ is a chain in $X$ and will be denoted by $\sigma^{y}$. In particular, if $y\not\in\sigma$, then $\sigma^{y}\in\ch_{n+1}(X)$. We will also write $\sigma_x^{y}$ for $(\sigma_x)^{y}=(\sigma^{y})_x$.
  
For $0\leq k\leq n$, the chain $\sigma_{\sigma_k}=\sigma-\{\sigma_k\}$ will be denoted by $\sigma_{\widehat{k}}$.
\end{notation}

\begin{proposition} \label{prop_remove_ubp}
Let $P$ be a finite poset, let $a$ be an up beat point of $P$ and let $\F\colon P\to \Ab$ be a functor. Then $H_n(P;\F)\cong H_n(P-\{a\};\F|_{P-\{a\}})$ for every $n\in\N_0$.
\end{proposition}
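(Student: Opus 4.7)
The plan is to reduce to showing that the relative homology groups $H_n(P,P-\{a\};\F)$ vanish for all $n$; then the long exact sequence of the pair $(P,P-\{a\})$ will give the claimed isomorphisms, with the inclusion $P-\{a\}\hookrightarrow P$ as the isomorphism map. Since $\{a\}=P-(P-\{a\})$ is trivially an antichain of $P$, Proposition \ref{prop_hom_X_A_Cx} applies and yields
\[
H_n(P,P-\{a\};\F)\cong H_n(C_a,\widehat C_a;\F|_{C_a})
\]
for every $n\in\N_0$. So the task reduces to proving that the quotient chain complex $C(C_a,\widehat C_a;\F|_{C_a})$ is acyclic.

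To do this I would construct an explicit chain contraction using the minimum $b=\min\widehat F_a$, which exists because $a$ is an up beat point of $P$ (and hence of $C_a$). For a chain $\sigma=(z_0<\cdots<z_n)$ of $C_a$ containing $a$ at position $k$ (so $z_k=a$), and an element $\lambda\in\F(z_0)=\F(\min\sigma)$, set
\[
h(\sigma,\lambda)=\begin{cases} 0 & \text{if } b\in\sigma,\\ (-1)^{k+1}\bigl(\sigma\cup\{b\},\lambda\bigr) & \text{if } b\notin\sigma.\end{cases}
\]
Note that inserting $b$ immediately after $a$ preserves the minimum ($b>a\geq z_0$), so $\lambda\in\F(\min(\sigma\cup\{b\}))$ as required, and $\sigma\cup\{b\}$ is indeed a chain because $b\leq z_j$ for every $j>k$ (since $z_j\in\widehat F_a$ and $b=\min\widehat F_a$).

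The core of the proof is then the verification that $hd+dh=\mathrm{id}$ on the quotient complex. I would split into two cases. When $b\in\sigma$, only the boundary term obtained by removing $b$ yields a chain still containing $a$ but not $b$; applying $h$ to that term reconstructs $\sigma$ with the two signs $(-1)^{k+1}$ multiplying to $+1$. When $b\notin\sigma$, the term in $d(h\sigma)$ that removes $b$ contributes $\sigma$; every other boundary term of $h\sigma$ (ignoring the term that removes $a$, which vanishes in the quotient) is paired with the corresponding term in $h(d\sigma)$, where the position of $a$ has shifted from $k$ to $k-1$ when an element with index $i<k$ is deleted, and this shift is precisely what flips the sign used by $h$ so that the two contributions cancel.

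The main obstacle is the sign and coefficient bookkeeping: one must track carefully how the position of $a$ in $\sigma$ shifts after deleting an element with smaller index, and confirm that the coefficient $\lambda$ (or its image $\F(z_0\leq z_1)(\lambda)$, when the term removes the minimum) matches on both sides of each cancellation. Once this is checked, $C(C_a,\widehat C_a;\F|_{C_a})$ is acyclic, whence $H_n(P,P-\{a\};\F)=0$ for all $n$, and the conclusion follows from the long exact sequence of the pair displayed at the beginning of Section~3.
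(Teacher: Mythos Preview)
Your argument is correct. The contraction $h$ you describe is precisely (up to a sign convention) the restriction to the relative complex of the chain homotopy the paper builds on the full complex, and your case split and sign analysis match the actual verification.

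The organization, however, differs from the paper's. The paper works directly with $C(P;\F)$: it writes down an explicit chain-level retraction $r\colon C(P;\F)\to C(P-\{a\};\F|)$ to the inclusion $i$, defines a homotopy $\phi$ satisfying $\phi d+d\phi=\id-ir$, and concludes that $i$ is a chain homotopy equivalence. You instead localize the problem first via Proposition~\ref{prop_hom_X_A_Cx} to the relative complex $C(C_a,\widehat C_a;\F|)$ and then exhibit a null-homotopy there, invoking the long exact sequence of the pair at the end. Your route is more modular and reuses an earlier result of the paper; the paper's route yields the slightly stronger statement that the inclusion is a chain homotopy equivalence (not merely a quasi-isomorphism), and produces the explicit retraction $r$, which is sometimes useful in its own right. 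The underlying computation (insert $b$ immediately after $a$, with the sign $(-1)^{k+1}$ when $a$ sits in position $k$) is the same in both.
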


\begin{proof}
Let $b=\min \widehat F_a$ and let $\varphi=\F(a<b)$. For every $n\in\N_0$ let $i_n\colon C_n(P-\{a\};\F|_{P-\{a\}})\to C_n(P;\F)$ be the group homomorphism induced by the inclusion $P-\{a\}\hookrightarrow P$ and let 
$r_n\colon C_n(P;\F)\to C_n(P-\{a\};F|_{P-\{a\}})$ be the group homomorphism defined by 
$$r_n(g\sigma)=\left\{
\begin{array}{lr}
g\sigma & \text{if $a\not\in\sigma$,}\\
0 & \text{if $\{a,b\}\subseteq \sigma$,}\\
g\sigma_a^{b} & \text{if $a\in\sigma$, $a\neq\min \sigma$ and $b\not\in\sigma$,}\\
\varphi(g)\sigma_a^{b} & \text{if $a\in\sigma$, $a=\min \sigma$ and $b\not\in\sigma$,}
\end{array}\right.$$
for $\sigma\in\ch_{n}(P)$ and $g\in \F(\min \sigma)$.
It is easy to see that $r_n$ is well defined and that $r_n$ is a retraction of $i_n$ for every $n\in\N_0$. Hence $r=\{r_n\}\colon C(P;\F)\to C(P-\{a\};\F|_{P-\{a\}})$ is a retraction of $i=\{i_n\}\colon  C(P-\{a\};\F|_{P-\{a\}})\to C(P;A)$.

For every $n\in\N_0$ let $\phi_n\colon C_n(P;\F)\to C_{n+1}(P;\F)$ be the group homomorphism defined by
$$\phi_n(g\sigma)=\left\{
\begin{array}{lr}
0 & \text{if $a\not\in\sigma$ or $\{a,b\}\subseteq \sigma$,}\\
-\sgn_\sigma(a) g \sigma^{b} & \text{if $a\in\sigma$ and $b\not\in\sigma$,}
\end{array}\right.$$
for $\sigma\in\ch_{n}(P)$ and $g\in \F(\min \sigma)$.

Let $\phi=\{\phi_n\}$.
We will show that $\phi d+d\phi=\id-ir$, that is, $\phi_{n-1}d_n+d_{n+1}\phi_n=\id_{C_n(P;\F)}-i_n r_n$ for every $n\in\N_0$. It will follow that $\phi$ is a chain homotopy between $\id_{C(P;\F)}$ and $ir$.

In what follows, we drop the subscript notation for the morphisms $\phi_n$, $d_n$, $i_n$, $r_n$ and $\id_{C_n(P;\F)}$. This should not create any confusion.

Let $n\in\N_0$, let $\sigma=\{\sigma_0,\dots,\sigma_n\}\in\ch_n(P)$ and let $g\in\F(\sigma_0)$. If $a\not\in\sigma$, then it is clear that $(\phi d+d\phi)(g\sigma)=0=(\id-ir)(g\sigma)$.
If $a\in\sigma$ and $b\in\sigma$ then $d\phi(g\sigma)=0$ and 
\begin{align*}
\phi d(g\sigma)&=\phi\left(\F(\sigma_0<\sigma_1)(g)\sigma_{\widehat{0}}+\sum\limits_{i=1}^{n}(-1)^{i}g\sigma_{\widehat{i}}\right)=\phi(\sgn_\sigma(b)g\sigma_b)=-\sgn_\sigma(a)\sgn_\sigma(b)g\sigma=g\sigma
\end{align*}
since $a$ and $b$ are consecutive elements of $\sigma$. Hence $(\phi d+d\phi)(g\sigma)=g\sigma=(\id-ir)(g\sigma)$.

Now suppose that $a=\sigma_0$ and $b\not\in\sigma$. In this case we have that 
\begin{align*}
d\phi(g\sigma)& =d(-g\sigma^{b})=-\F((\sigma^{b})_0<(\sigma^{b})_1)(g)(\sigma^{b})_{\widehat{0}}+\sum\limits_{i=1}^{n+1}(-1)^{i+1}g(\sigma^{b})_{\widehat{i}}= \\
&=-\varphi(g)\sigma^{b}_a+g\sigma+\sum\limits_{i=2}^{n+1}(-1)^{i+1}g(\sigma^{b})_{\widehat{i}})= -\varphi(g)\sigma^{b}_a+g\sigma+\sum\limits_{i=1}^{n}(-1)^{i}g(\sigma_{\widehat{i}})^{b}
\end{align*}
since $(\sigma^{b})_{\widehat{1}}=\sigma$ and $(\sigma^{b})_{\widehat{i+1}}=(\sigma_{\widehat{i}})^{b}$ for $1\leq i\leq n$.

On the other hand, we have that 
\begin{align*}
\phi d(g\sigma)=\phi\left(\F(\sigma_0<\sigma_1)(g)\sigma_{\widehat{0}}+\sum\limits_{i=1}^{n}(-1)^{i}g\sigma_{\widehat{i}}\right)=\sum\limits_{i=1}^{n}(-1)^{i}\phi(g\sigma_{\widehat{i}})=\sum\limits_{i=1}^{n}(-1)^{i+1}g(\sigma_{\widehat{i}})^{b}.
\end{align*}
It follows that $(d\phi+\phi d)(g\sigma)=g\sigma-\varphi(g)\sigma_a^{b}=(\id-ir)(g\sigma)$.

Finally, suppose that $a=\sigma_k$ for $1\leq k\leq n$ and that $b\not\in\sigma$. Then
$$d\phi(g\sigma)=(-1)^{k+1}\F((\sigma^{b})_0<(\sigma^{b})_1)(g)(\sigma^{b})_{\widehat{0}}+\sum\limits_{i=1}^{n+1}(-1)^{i+k+1}g(\sigma^{b})_{\widehat{i}}$$
and
$$d\phi(g\sigma)=(-1)^{k}\F(\sigma_0<\sigma_1)(g)(\sigma_{\widehat{0}})^{b}+\sum\limits_{i=1}^{k-1}(-1)^{i+k}g(\sigma_{\widehat{i}})^{b}-\sum\limits_{i=k+1}^{n}(-1)^{i+k}g(\sigma_{\widehat{i}})^{b}.$$
Now, observe that $(\sigma_{\widehat{i}})^{b}=(\sigma^{b})_{\widehat{i}}$ for $0\leq i\leq k-1$, $(\sigma_{\widehat{i}})^{b}=(\sigma^{b})_{\widehat{i+1}}$ for $k+1\leq i\leq n$, $(\sigma^{b})_{\widehat{k}}=\sigma_a^{b}$ and $(\sigma^{b})_{\widehat{k+1}}=\sigma$. It follows that $(d\phi+\phi d)(g\sigma)=g\sigma-g\sigma_a^{b}=(\id-ir)(g\sigma)$.

Thus $\id$ and $ri$ are chain homotopic, which shows that $C(P;\F)$ and $C(P-\{a\};\F|_{P-\{a\}})$ are homotopy equivalent chain complexes. Therefore $H_n(P;F)\cong H_n(P-\{a\};\F|_{P-\{a\}})$ for every $n\in\N_0$.
\end{proof}

\begin{proposition} \label{prop_remove_dbp}
Let $P$ be a finite poset, let $a$ be a down beat point of $P$ and let $b=\max \widehat F_a$. Let $\F\colon P\to \Ab$ be a functor. If $\F(b\leq a)$ is an isomorphism then $H_n(P;\F)\cong H_n(P-\{a\};\F)$ for every $n\in\N_0$.
\end{proposition}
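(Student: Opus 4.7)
The plan is to construct an explicit chain homotopy equivalence between $C(P;\F)$ and $C(P-\{a\};\F|_{P-\{a\}})$, mirroring the argument in Proposition~\ref{prop_remove_ubp} with the modifications appropriate to a down beat point. Here $b<a$, so the morphism $\varphi := \F(b\leq a)\colon \F(b)\to \F(a)$ is available, and we will use its inverse (which exists by hypothesis) whenever the retraction must transfer coefficients from $\F(a)$ to $\F(b)$. The key observation I will rely on throughout is that if $a\in\sigma$ with $a\neq\min\sigma$, then $\min\sigma<a$ places $\min\sigma$ in $\widehat U_a$, so $\min\sigma\leq b$; if moreover $b\notin\sigma$, then $\min\sigma<b$, so $\min(\sigma_a^b)=\min\sigma$ and no coefficient transfer is required. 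It is only when $a=\min\sigma$ that the minimum of the chain shifts to $b$, and this is precisely where $\varphi^{-1}$ is employed.

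Concretely, I would let $i_n\colon C_n(P-\{a\};\F|_{P-\{a\}})\to C_n(P;\F)$ be induced by inclusion, and define the retraction
\[
r_n(g\sigma)=\begin{cases} g\sigma & \text{if } a\notin\sigma,\\ 0 & \text{if } \{a,b\}\subseteq\sigma,\\ g\,\sigma_a^b & \text{if } a\in\sigma,\ a\neq\min\sigma,\ b\notin\sigma,\\ \varphi^{-1}(g)\,\sigma_a^b & \text{if } a=\min\sigma,\ b\notin\sigma, \end{cases}
\]
together with the prospective chain homotopy $\phi_n\colon C_n(P;\F)\to C_{n+1}(P;\F)$ given by
\[
\phi_n(g\sigma)=\begin{cases} 0 & \text{if } a\notin\sigma \text{ or } \{a,b\}\subseteq\sigma,\\ \sgn_{\sigma^b}(b)\,g\,\sigma^b & \text{if } a\in\sigma,\ a\neq\min\sigma,\ b\notin\sigma,\\ \sgn_{\sigma^b}(b)\,\varphi^{-1}(g)\,\sigma^b & \text{if } a=\min\sigma,\ b\notin\sigma, \end{cases}
\]
where $\sigma^b=\sigma\cup\{b\}$ is a chain because $b<a\in\sigma$ and every element of $\sigma$ strictly below $a$ is $\leq b$. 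It is immediate that $r_n i_n = \id$. The remaining task is to show that $\phi_{n-1} d_n + d_{n+1}\phi_n = \id - i_n r_n$; once this is established, $H_n(P;\F)\cong H_n(P-\{a\};\F|_{P-\{a\}})$ follows at once.

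The main obstacle will be this case-by-case verification, most delicately when $a=\min\sigma$ and $b\notin\sigma$. In $d_n(g\sigma)$ the term $\F(a<\sigma_1)(g)\,\sigma_{\widehat 0}$ drops to zero under $\phi_{n-1}$ (since $a\notin\sigma_{\widehat 0}$), while every other summand still has $a$ as its minimum, so $\phi_{n-1}$ re-inserts $b$ and applies $\varphi^{-1}$; on the other side, $d_{n+1}\phi_n(g\sigma)$ produces (i) a term $\varphi(\varphi^{-1}(g))\,\sigma=g\sigma$ from removing $b$ from $\sigma^b$, (ii) a term $-\varphi^{-1}(g)\,\sigma_a^b$ from removing $a$, and (iii) terms that cancel with $\phi_{n-1}d_n(g\sigma)$ pairwise. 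For the case $a\neq\min\sigma$ with $b\notin\sigma$, the crucial functorial identity
\[
\F(\min\sigma < a) \;=\; \varphi\circ \F(\min\sigma < b),
\]
valid because $\min\sigma<a$ forces $\min\sigma\leq b$ and $\min\sigma\neq b$, is what makes the $\varphi^{-1}$ appearing in $\phi_{n-1}d_n$ agree with the coefficients appearing in $d_{n+1}\phi_n$. This identity, essentially the functoriality of $\F$ restricted to the cover relations near $a$, is the combinatorial core of the argument and is the precise reason the isomorphism hypothesis on $\F(b\leq a)$ is required.
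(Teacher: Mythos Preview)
Your proposal is correct and follows essentially the same route as the paper. Your maps $r_n$ and $\phi_n$ coincide with the paper's (note that $\sgn_{\sigma^b}(b)=\sgn_\sigma(a)$ since every element of $\sigma$ strictly below $a$ lies strictly below $b$, so $b$ occupies position $k$ in $\sigma^b$ exactly when $a=\sigma_k$), and you have correctly isolated the one place where the hypothesis that $\F(b\leq a)$ is an isomorphism is genuinely needed, namely the functorial identity $\F(\sigma_0<a)=\varphi\circ\F(\sigma_0<b)$ used in the subcase $a=\sigma_1$.
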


\begin{proof}
Let $\varphi=\F(b<a)^{-1}$. Let $i_n$ and $r_n$ be defined as in the proof of \ref{prop_remove_ubp} for every $n\in\N_0$, as well as $i$ and $r$. Again, it is easy to see that $r_n$ is well defined for every $n\in\N_0$.

As before, our purpose is to define a chain homotopy between $\id$ and $ir$. For every $n\in\N_0$ let $\phi_n\colon C_n(P;\F)\to C_{n+1}(P;\F)$ be the group homomorphism defined by
$$\phi_n(g\sigma)=\left\{
\begin{array}{lr}
0 & \text{if $a\not\in\sigma$ or $\{a,b\}\subseteq \sigma$,}\\
\sgn_\sigma(a) g \sigma^{b} & \text{if $a\in\sigma$, $a\neq\min\sigma$ and $b\not\in\sigma$,}\\
\varphi(g)\sigma^{b} & \text{if $a\in\sigma$, $a=\min\sigma$ and $b\not\in\sigma$,}
\end{array}\right.$$
for $\sigma\in\ch_{n}(P)$ and $g\in \F(\min \sigma)$ and let $\phi=\{\phi_n\}$.

Let $n\in\N_0$, let $\sigma=\{\sigma_0,\dots,\sigma_n\}\in\ch_n(P)$ and let $g\in\F(\sigma_0)$. If $a\not\in\sigma$ it is immediate that $(\phi d+d\phi)(g\sigma)=0=(\id-ir)(g\sigma)$.

Suppose that $\{a,b\}\subseteq \sigma$. Then $d\phi(g\sigma)=0$ and

$$\phi d(g\sigma)=\phi\left(\F(\sigma_0<\sigma_1)(g)\sigma_{\widehat 0}+\sum\limits_{i=1}^{n}(-1)^{i}g\sigma_{\widehat i}\right)=
\left\{\begin{array}{lr}
\phi(\varphi^{-1}(g)\sigma_b)&\text{if $\sigma_0=b$,}\\
\phi(\sgn_\sigma (b) g \sigma_b)&\text{if $\sigma_0\neq b$.}
\end{array}\right.
$$
Since $\sgn_\sigma(b)=\sgn_{\sigma_b}(a)$, it follows from the definition of $\phi$ that $\phi d(g\sigma)=g\sigma$ whether or not $\sigma_0=b$, and thus $(d\phi+\phi d)(g\sigma)=g\sigma=(\id-ir)(g\sigma)$.

Now suppose that $a=\sigma_0$ and $b\not\in\sigma$. Then
$$d\phi(g\sigma)=d(\varphi(g)\sigma^{b})=g\sigma+\sum\limits_{i=1}^{n+1}(-1)^{i}\varphi(g)(\sigma^{b})_{\widehat{i}}=g\sigma-\varphi(g)\sigma_a^{b}+\sum\limits_{i=2}^{n+1}(-1)^{i}\varphi(g)(\sigma^{b})_{\widehat{i}}$$
and
$$\phi d(g\sigma)=\sum\limits_{i=1}^{n}(-1)^{i}\varphi(g)(\sigma_{\widehat{i}})^{b}.$$

Since $(\sigma_{\widehat{i}})^{b}=(\sigma^{b})_{\widehat{i+1}}$ for $1\leq i\leq n$, then
$(d\phi+\phi d)(g\sigma)=g\sigma-\varphi(g)\sigma_a^{b}=(\id-ir)(g\sigma)$.

Finally, suppose that $a=\sigma_k$ with $k\geq 1$ and $b\not\in\sigma$. If $k=1$, then 
$$\phi(\F(\sigma_0<\sigma_1)(g)\sigma_{\widehat{0}})=(\varphi\circ\F(\sigma_0<a))(g)(\sigma_{\widehat{0}})^{b}=\F(\sigma_0<b)(g)(\sigma_{\widehat{0}})^{b}$$
and if $k\geq 2$ then
$$\phi(\F(\sigma_0<\sigma_1)(g)\sigma_{\widehat{0}})=\sgn_{\sigma_{\widehat{0}}}(a)\F(\sigma_0<\sigma_1)(g)(\sigma_{\widehat{0}})^{b}.$$
In either case, it is clear that 
$$\phi(\F(\sigma_0<\sigma_1)(g)\sigma_{\widehat{0}})=-\sgn_\sigma(a)\F((\sigma^{b})_0<(\sigma^{b})_1)(g)(\sigma_{\widehat{0}})^{b}.$$
Thus
$$\phi d(g\sigma)=-\sgn_\sigma(a)\F((\sigma^{b})_0<(\sigma^{b})_1)(g)(\sigma_{\widehat{0}})^{b}+\sum\limits_{i=1}^{n}(-1)^{i}\sgn_{\sigma_{\widehat{i}}}(a)g(\sigma_{\widehat{i}})^{b}.$$

On the other hand,
$$d\phi(g\sigma)=\sgn_\sigma(a)\F((\sigma^{b})_0<(\sigma^{b})_1)(g)(\sigma^{b})_{\widehat{0}}+\sgn_\sigma(a)\sum\limits_{i=1}^{n+1}(-1)^{i}(\sigma^{b})_{\widehat{i}}.$$
Using the fact that $(\sigma_{\widehat{i}})^{b}=(\sigma^{b})_{\widehat{i+j}}$ where $j=0$ if $1\leq i\leq k-1$ and $j=1$ if $k+1\leq i\leq n$, we obtain that
$$(d\phi+\phi d)(g\sigma)=g\sigma-g\sigma_a^{b}=(\id-ir)(g\sigma).$$  The result follows.
\end{proof}

Let $(P,\leq_P)$ and $(Q,\leq_Q)$ be finite posets and let $f\colon P\to Q$ be an order-preserving map. We define $P\cup_f Q$ as the poset whose underlying set is the disjoint union $P\sqcup Q$ and whose ordering $\leq$ is defined as follows. If $p_1,p_2\in P$ then $p_1\leq p_2$ if and only if $p_1\leq_P p_2$. If $q_1,q_2\in Q$ then $q_1\leq q_2$ if and only if $q_1\leq_Q q_2$. If $p\in P$ and $q\in Q$ then $p\leq q$ if and only if $f(p)\leq_Q q$.

Now, let $\F_P\colon P\to\Ab$ and $\F_Q\colon Q\to\Ab$ be covariant functors and let $\phi\colon \F_P\Rightarrow\F_Q\circ f$ be a natural transformation. We define the functor $\F_P\underset{f,\phi}{\cup} \F_Q\colon P\cup_f Q \to \Ab$ as in \cite{everitt2009homology}. This is, in objects, $(\F_P\underset{f,\phi}{\cup} \F_Q)(p)=\F_P(p)$ for $p\in P$ and $(\F_P\underset{f,\phi}{\cup} \F_Q)(q)=\F_Q(q)$ for $q\in Q$. In arrows, $(\F_P\underset{f,\phi}{\cup} \F_Q)(p_1\leq p_2)=\F_P(p_1\leq_P p_2)$ for $p_1,p_2\in P$, $(\F_P\underset{f,\phi}{\cup} \F_Q)(q_1\leq q_2)=\F_Q(q_1\leq_Q q_2)$ for $q_1,q_2\in Q$ and  $(\F_P\underset{f,\phi}{\cup} \F_Q)(p\leq q)=\F_Q(f(p)\leq_Q q)\circ\phi_p$ for $p\in P$ and $q\in Q$ .

\begin{proposition} \label{prop_mapping_cylinder}
Let $P$ and $Q$ be finite posets and let $f\colon P\to Q$ be an order-preserving map. Let $\F_P\colon P\to\Ab$ and $\F_Q\colon Q\to\Ab$ be covariant functors and let $\phi\colon \F_P\Rightarrow\F_Q\circ f$ be a natural transformation. Then the inclusion map $Q\hookrightarrow P\cup_f Q$ induces an isomorphism  $H_n(P\cup_f Q;\F_P\underset{f,\phi}{\cup} \F_Q)\cong H_n(Q;\F_Q)$ for all $n\in\Z$.
\end{proposition}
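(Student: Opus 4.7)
The plan is to induct on $|P|$, peeling off one maximal element of $P$ at a time by appealing to Proposition \ref{prop_remove_ubp}. When $P=\varnothing$ the poset $P\cup_f Q$ reduces to $Q$ and the functor to $\F_Q$, so the statement is trivial. For the inductive step, pick a maximal element $a\in P$. The key geometric observation is that $a$ is an up beat point of $P\cup_f Q$, with $\min\widehat F^{P\cup_f Q}_a = f(a)$: by construction
\[
\widehat F^{P\cup_f Q}_a=\{p'\in P:p'>_P a\}\,\cup\,\{q\in Q:f(a)\leq_Q q\},
\]
and since $a$ is maximal in $P$ the first set is empty, so $\widehat F^{P\cup_f Q}_a=F^Q_{f(a)}$, which has $f(a)$ as minimum.

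Applying Proposition \ref{prop_remove_ubp} to the up beat point $a$ yields an isomorphism
\[
H_n(P\cup_f Q;\F_P\underset{f,\phi}{\cup}\F_Q)\;\cong\; H_n\bigl((P\cup_f Q)-\{a\};(\F_P\underset{f,\phi}{\cup}\F_Q)|_{(P\cup_f Q)-\{a\}}\bigr)
\]
induced by the inclusion. Next, a direct check from the definitions shows that, as a poset, $(P\cup_f Q)-\{a\}$ is exactly $(P-\{a\})\cup_{f|_{P-\{a\}}}Q$, and the restricted functor coincides with $\F_P|_{P-\{a\}}\underset{f|,\phi|}{\cup}\F_Q$, where $\phi|$ is the obvious restriction of $\phi$.

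By the inductive hypothesis applied to the finite poset $P-\{a\}$ (with strictly smaller cardinality), the inclusion $Q\hookrightarrow(P-\{a\})\cup_{f|}Q$ induces an isomorphism
\[
H_n\bigl((P-\{a\})\cup_{f|}Q;\F_P|_{P-\{a\}}\underset{f|,\phi|}{\cup}\F_Q\bigr)\;\cong\; H_n(Q;\F_Q).
\]
Composing the two isomorphisms, both of which are induced by the corresponding inclusion maps, produces an isomorphism $H_n(P\cup_f Q;\F_P\underset{f,\phi}{\cup}\F_Q)\cong H_n(Q;\F_Q)$ induced by the inclusion $Q\hookrightarrow P\cup_f Q$, as desired.

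The only nontrivial point is the beat point verification above; once that is in hand, Proposition \ref{prop_remove_ubp} does the essential homological work (note that no hypothesis on $\phi_a$ is needed, in contrast with Proposition \ref{prop_remove_dbp}), and the induction proceeds without further obstacle.
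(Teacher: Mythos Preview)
Your proof is correct and follows essentially the same approach as the paper: the paper's proof simply observes that $Q$ is obtained from $P\cup_f Q$ by successively removing up beat points (namely the maximal elements of $P$), and then applies Proposition \ref{prop_remove_ubp} repeatedly. Your argument formalizes this very procedure as an induction on $|P|$, and supplies the explicit verification that a maximal $a\in P$ has $\widehat F^{P\cup_f Q}_a=F^Q_{f(a)}$, which the paper leaves implicit.
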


\begin{proof}
Follows from \ref{prop_remove_ubp} since the poset $Q$ can be obtained from the poset $P\cup_f Q$ by successively removing up beat points (note that the maximal points of $P$ are up beat points of $P\cup_f Q$).
\end{proof}

\begin{theo}
Let $P$ and $Q$ be finite posets such that both of them have a maximum. Let $1_P=\max P$ and let $1_Q=\max Q$. Let $f\colon P\to Q$ be an order-preserving map such that $f^{-1}(\{1_Q\})=\{1_P\}$. Let $\F_P\colon P\to\Ab$ and $\F_Q\colon Q\to\Ab$ be covariant functors and let $\phi\colon \F_P\Rightarrow\F_Q\circ f$ be a natural transformation.

Then there exists a long exact sequence
\begin{align*}
\cdots &\longrightarrow H_n(Q,Q-\{1_Q\};\F_Q) \longrightarrow H_n(P\cup_f Q,P\cup_f Q-\{1_Q\};\F_P\underset{f,\phi}{\cup} \F_Q) \longrightarrow H_{n-1}(P,P-\{1_P\};\F_P) \longrightarrow \\ 
& \longrightarrow H_{n-1}(Q,Q-\{1_Q\};\F_Q) \longrightarrow \cdots
\end{align*}
\end{theo}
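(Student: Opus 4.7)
The plan is to produce a short exact sequence of chain complexes whose associated long exact sequence in homology is the one claimed. Write $A = P \cup_f Q$, $\F = \F_P \underset{f,\phi}{\cup} \F_Q$, and let $C_\ast = C_\ast(A, A - \{1_Q\}; \F)$, which as an abelian group is freely generated by chains of $A$ containing $1_Q$. I split such chains according to whether they also contain $1_P$: let $C^A_\ast \subseteq C_\ast$ be spanned by the chains containing $1_Q$ but not $1_P$, and let $C^B_\ast \subseteq C_\ast$ be spanned by the chains containing both. Inspection of the boundary shows that $C^A_\ast$ is a subcomplex, since removing any element other than $1_Q$ from such a chain yields another chain of the same kind and removing $1_Q$ gives zero in the relative complex; by contrast, the boundary of a chain of the second kind decomposes into a component in $C^B_\ast$ and an extra term in $C^A_\ast$ coming from the removal of $1_P$. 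This yields a short exact sequence $0 \to C^A_\ast \to C_\ast \to C^B_\ast \to 0$.

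The next step is to identify $C^B_\ast$. Every $n$-chain of $C^B_\ast$ has the form $\sigma_0 \cup \{1_P, 1_Q\}$ with $\sigma_0 \in \ch_{n-2}(P - \{1_P\})$ (or $\sigma_0 = \varnothing$ when $n = 1$), and its coefficient group is $\F_P(\min \sigma_0)$, respectively $\F_P(1_P)$. The bijective correspondence $\sigma_0 \cup \{1_P, 1_Q\} \leftrightarrow \sigma_0 \cup \{1_P\}$ identifies $C^B_n$ with $C_{n-1}(P, P - \{1_P\}; \F_P)$ as abelian groups; appending $1_Q$ at the top does not shift the positions of the elements of $\sigma_0$, so the signs agree, and the induced differential on the quotient $C^B_\ast$ only removes elements of $\sigma_0$ with the same coefficient transitions as in the relative complex of $(P, P - \{1_P\})$. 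Hence $C^B_\ast \cong C_{\ast - 1}(P, P - \{1_P\}; \F_P)$ as chain complexes and $H_n(C^B_\ast) \cong H_{n-1}(P, P - \{1_P\}; \F_P)$.

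Next I identify $C^A_\ast$. Since $f^{-1}(\{1_Q\}) = \{1_P\}$, the only element of $A$ strictly above $1_P$ is $1_Q$, so the chains of $A$ containing $1_Q$ but not $1_P$ are exactly the chains of $A - \{1_P\}$ containing $1_Q$, giving $C^A_\ast = C_\ast(A - \{1_P\}, A - \{1_P, 1_Q\}; \F|)$. The same hypothesis implies that $f$ restricts to an order-preserving map $f|\colon P - \{1_P\} \to Q - \{1_Q\}$, and a direct check from the definitions yields $A - \{1_P\} = (P - \{1_P\}) \cup_{f|} Q$ and $A - \{1_P, 1_Q\} = (P - \{1_P\}) \cup_{f|} (Q - \{1_Q\})$, with $\F|$ the corresponding union of restricted functors with respect to the restriction of $\phi$. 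Proposition \ref{prop_mapping_cylinder} applied twice then provides isomorphisms $H_n(Q; \F_Q) \cong H_n(A - \{1_P\}; \F|)$ and $H_n(Q - \{1_Q\}; \F_Q) \cong H_n(A - \{1_P, 1_Q\}; \F|)$, both induced by inclusions. These inclusions form a commutative square with the pair inclusions $Q - \{1_Q\} \hookrightarrow Q$ and $A - \{1_P, 1_Q\} \hookrightarrow A - \{1_P\}$, so the five-lemma applied to the resulting morphism of long exact sequences of pairs yields $H_n(Q, Q - \{1_Q\}; \F_Q) \cong H_n(C^A_\ast)$.

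The claimed long exact sequence is then the homology long exact sequence of $0 \to C^A_\ast \to C_\ast \to C^B_\ast \to 0$ rewritten through the two identifications above. The main technical hurdle I anticipate is the sign and coefficient bookkeeping for the identification of $C^B_\ast$ with the shifted complex, since its differential is not literally the restriction of the boundary of $C_\ast(A; \F)$ but rather its image modulo $C^A_\ast$; verifying that the boundary of $0 \to C^A_\ast \to C_\ast \to C^B_\ast \to 0$ recovers the correct connecting morphism in the claimed sequence should then be immediate from naturality.
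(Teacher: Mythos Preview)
Your argument is correct and close in spirit to the paper's proof; both identify the $Q$--piece via Proposition~\ref{prop_mapping_cylinder} together with the five-lemma, and both reduce the $P$--piece to $H_\ast(P,P-\{1_P\};\F_P)$. The implementations differ slightly: the paper uses the long exact sequence of the triple $(X,\,X-\{1_Q\},\,X-\{1_P,1_Q\})$, identifying $H_n(X-\{1_Q\},X-\{1_P,1_Q\};\F_X|)\cong H_n(P,P-\{1_P\};\F_P)$ through Proposition~\ref{prop_hom_X_A_Cx} (since $U_{1_P}^{X-\{1_Q\}}=P$), and identifying $H_n(X,X-\{1_P,1_Q\};\F_X)\cong H_n(Q,Q-\{1_Q\};\F_Q)$ via the mapping-cylinder argument. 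You instead split $C_\ast(X,X-\{1_Q\};\F_X)$ directly according to whether chains contain $1_P$, obtaining $C^A_\ast=C_\ast(X-\{1_P\},X-\{1_P,1_Q\};\F_X|)$ as a subcomplex and exhibiting the quotient $C^B_\ast$ as a degree-shifted copy of $C_\ast(P,P-\{1_P\};\F_P)$ at the chain level. Your route avoids quoting Proposition~\ref{prop_hom_X_A_Cx} and makes the shift in degree explicit, at the price of the sign and coefficient check you flagged (which is indeed harmless, since $1_Q$ sits at the top of every chain in $C^B_\ast$ and thus does not affect the positions of the remaining elements); the paper's route is a bit more modular, packaging everything in terms of previously established relative-homology identifications.
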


\begin{proof}
Let $X=P\cup_f Q$ and let $\F_X=\F_P\underset{f,\phi}{\cup} \F_Q$.
There exists a long exact sequence
\begin{displaymath}
\cdots \longrightarrow H_n(X-\{1_Q\},X-\{1_Q,1_P\};\F_X|_{X-\{1_P\}}) \longrightarrow H_n(X,X-\{1_Q,1_P\};\F_X) \longrightarrow H_{n}(X,X-\{1_Q\};\F_X) \longrightarrow \cdots
\end{displaymath}
Observe that, applying \ref{prop_hom_X_A_Cx}, we obtain that 
$$H_{n}(X-\{1_Q\},X-\{1_Q,1_P\};\F_X|_{X-\{1_Q\}})\cong H_{n}(U_{1_P},\widehat U_{1_P};\F_X|_{U_{1_P}}) \cong H_{n}(P,P-\{1_P\};\F_P)$$
for all $n\in\Z$.

On the other hand, from \ref{prop_mapping_cylinder} we obtain that the inclusion maps $Q\hookrightarrow X$ and $Q-\{1_Q\}\hookrightarrow X-\{1_Q,1_P\}$ induce isomorphisms $H_n(Q;\F_Q) \cong H_n(X;\F_X)$ and $H_n(Q-\{1_Q\};\F_Q|_{Q-\{1_Q\}}) \cong H_n(X-\{1_Q,1_P\};\F_X|_{X-\{1_Q,1_P\}})$ for all $n\in\Z$. Therefore, from the long exact sequences in homology groups with coefficients in the functor $\F_X$ (or its corresponding restrictions) for the pairs $(Q,Q-\{1_Q\})$ and $(X,X-\{1_Q,1_P\})$, and applying naturality (see page \pageref{morphism_induced_by_natural_transformation}) and the five lemma we obtain that $H_n(X,X-\{1_Q,1_P\};\F_X)\cong H_n(Q,Q-\{1_Q\};\F_Q)$ for all $n\in\Z$.

Therefore, the long exact sequence of the beginning of this proof turns out to be the long exact sequence of the statement of the theorem.
\end{proof}

As a corollary, we obtain an alternative and more conceptual proof to a known result of Khovanov homology which was also proved by Everitt and Turner in \cite{everitt2009homology}.

\begin{coro}
Let $B$ be a boolean lattice. Let $B_0$ and $B_1$ as in Everitt-Turner. Let $\F\colon B\to\Ab$ be a functor. Then there exists a long exact sequence
\[\cdots \longrightarrow H_n(B_1;\F|) \longrightarrow H_n(B;\F) \longrightarrow H_{n-1}(B_0;\F|) \longrightarrow \cdots \]
\end{coro}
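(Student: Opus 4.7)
The plan is a direct application of the preceding theorem, after identifying the Everitt--Turner decomposition of the boolean lattice $B$ with the amalgamation construction $P\cup_f Q$. Concretely, choose any atom $e$ of $B$ and let $B_0$ (respectively $B_1$) consist of those $x\in B$ whose atomic expression does not contain $e$ (respectively does contain $e$). Both are boolean lattices of rank $\rk(B)-1$, with maxima $1_{B_0}$ and $1_{B_1}=1_B$. The map $f\colon B_0\to B_1$ given by $f(x)=x\vee e$ is order-preserving and satisfies $f^{-1}(\{1_{B_1}\})=\{1_{B_0}\}$; from the product decomposition $B\cong\{0,1\}^{\rk(B)}$ one verifies immediately that $B=B_0\cup_f B_1$ as posets.

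Next, set $\F_P=\F|_{B_0}$ and $\F_Q=\F|_{B_1}$, and define $\phi\colon \F_P\Rightarrow \F_Q\circ f$ by $\phi_x=\F(x\leq f(x))$; naturality is immediate from the functoriality of $\F$. I would then check that $\F=\F_P\underset{f,\phi}{\cup}\F_Q$: on objects this is just the definition, and for a morphism $p\leq q$ with $p\in B_0$ and $q\in B_1$ one has $(\F_P\underset{f,\phi}{\cup}\F_Q)(p\leq q)=\F(f(p)\leq q)\circ\F(p\leq f(p))=\F(p\leq q)$ by functoriality.

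Finally, applying the preceding theorem with these choices produces the long exact sequence
\[\cdots\to H_n(B_1,B_1-\{1_{B_1}\};\F|_{B_1})\to H_n(B,B-\{1_B\};\F)\to H_{n-1}(B_0,B_0-\{1_{B_0}\};\F|_{B_0})\to\cdots\]
which, under the Everitt--Turner convention that $H_n(B_i;\F|)$ abbreviates $H_n(B_i,B_i-\{1_{B_i}\};\F|_{B_i})$, is exactly the claim. There is no substantive obstacle here: the whole argument is a specialization of the previous theorem, and the only step requiring verification is the poset identification $B=B_0\cup_f B_1$ together with the coefficient identification $\F=\F_P\underset{f,\phi}{\cup}\F_Q$, both of which are combinatorial consequences of the product structure on $B$.
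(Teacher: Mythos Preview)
Your proposal is correct and is precisely the intended argument: the paper states the corollary without proof, leaving it as an immediate specialization of the preceding theorem, and your identification of $B$ with $B_0\cup_f B_1$ via $f(x)=x\vee e$ together with the restricted functors and the natural transformation $\phi_x=\F(x\leq f(x))$ is exactly the unpacking that makes this specialization work. The only point worth flagging is notational: as you note, $H_n(B;\F)$ in the corollary must be read as the Khovanov-type relative group $H_n(B,B-\{1_B\};\F)$, consistent with the earlier identification in the paper.
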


\bibliographystyle{acm}
\bibliography{ref_hom_funct_coeff}

\begin{thebibliography}{10}

\bibitem{cianci2019coverings}
{\sc Cianci, N.}
\newblock Regular coverings and fundamental groupoids of {A}lexandroff spaces.
\newblock {\em arXiv preprint arXiv:1907.02624\/} (2019).
\newblock Disponible en \url{https://arxiv.org/abs/1907.02624}.

\bibitem{cianci2017new}
{\sc Cianci, N., and Ottina, M.}
\newblock A new spectral sequence for homology of posets.
\newblock {\em Topology Appl. 217\/} (2017), 1--19.

\bibitem{cianci2018splitting}
{\sc Cianci, N., and Ottina, M.}
\newblock Poset splitting and minimality of finite models.
\newblock {\em J. Combin. Theory Ser. A 157\/} (2018), 120--161.

\bibitem{cianci2019classification}
{\sc Cianci, N., and Ottina, M.}
\newblock Classification of fiber bundles over {A}lexandroff spaces with
  {T}$_0$ fiber.
\newblock {\em arXiv preprint arXiv:1907.03614\/} (2019).
\newblock Disponible en \url{https://arxiv.org/abs/1907.03614}.

\bibitem{eilenberg1947homology}
{\sc Eilenberg, S.}
\newblock Homology of spaces with operators. {I}.
\newblock {\em Trans. Amer. Math. Soc. 61\/} (1947), 378--417; errata, 62, 548
  (1947).

\bibitem{everitt2009homology}
{\sc Everitt, B., and Turner, P.}
\newblock Homology of coloured posets: a generalisation of {K}hovanov's cube
  construction.
\newblock {\em J. Algebra 322}, 2 (2009), 429--448.

\bibitem{fernandez2016homotopy}
{\sc Fernandez, X., and Minian, G.}
\newblock Homotopy colimits of diagrams over posets and variations on a theorem
  of {T}homason.
\newblock {\em Homology Homotopy Appl. 18}, 2 (2016), 233--245.

\bibitem{gabriel1967calculus}
{\sc Gabriel, P., and Zisman, M.}
\newblock {\em Calculus of fractions and homotopy theory}.
\newblock Springer, Berlin, 1967.

\bibitem{hardie2002nontrivial}
{\sc Hardie, K.~A., Vermeulen, J. J.~C., and Witbooi, P.~J.}
\newblock A nontrivial pairing of finite {$T_0$} spaces.
\newblock {\em Topology Appl. 125}, 3 (2002), 533--542.

\bibitem{mccord1966singular}
{\sc McCord, M.}
\newblock Singular homology groups and homotopy groups of finite topological
  spaces.
\newblock {\em Duke Mathematical Journal 33}, 3 (1966), 465--474.

\bibitem{quillen1978homotopy}
{\sc Quillen, D.}
\newblock Homotopy properties of the poset of nontrivial p--subgroups of a
  group.
\newblock {\em Advances in Mathematics 28}, 2 (1978), 101--128.

\bibitem{stong1966finite}
{\sc Stong, R.}
\newblock Finite topological spaces.
\newblock {\em Transactions of the American Mathematical Society 123}, 2
  (1966), 325--340.

\end{thebibliography}

\end{document}